\newtheorem{theorem}[subsection]{Theorem}
\newtheorem{proposition}[subsection]{Proposition}
\newtheorem{corollary}[subsection]{Corollary}
\newtheorem{lemma}[subsection]{Lemma}
\theoremstyle{definition}
\newtheorem{remark}[subsection]{Remark}
\numberwithin{equation}{subsection}
\begin{document}

\title {On Katz's $(A,B)$-exponential sums}

\author{Lei Fu and Daqing Wan}
\address{Yau Mathematical Sciences Center, Tsinghua University, Beijing 100084, P. R. China} 
\email{leifu@math.tsinghua.edu.cn}
\address{Department of Mathematics, University of California, Irvine, CA 92697, USA}
\email{dwan@math.uci.edu}

\date{}
\maketitle

\begin{abstract}
We deduce Katz's theorems for $(A,B)$-exponential sums over finite fields using $\ell$-adic cohomology and 
a theorem of Denef-Loeser,  removing the hypothesis that $A+B$ is relatively prime to the characteristic $p$.  In some degenerate cases, 
the Betti number estimate is improved using  toric decomposition and Adolphson-Sperber's bound for the degree of 
$L$-functions.  Applying the facial decomposition theorem in \cite{W1}, we prove that the universal 
family of $(A,B)$-polynomials is generically ordinary for its $L$-function when $p$ is in 
certain arithmetic progression. 
\end{abstract} 

\section*{Introduction}
Let $k$ be a finite field with $q$ elements of characteristic $p$ and let $d$ be a positive integer. 
A polynomial $f(t_1, \ldots, t_n)$ in $k[x_1,..., t_n]$ of degree $d>0$ is called a \emph{Deligne polynomial} 
 if $d$ is prime to $p$, and the leading form $f_d$ of $f$ defines a smooth projective hypersurface $f_d=0$ in ${\mathbb P}^{n-1}$. 
For positive integers $A$ and $B$, the $(A,B)$-polynomial considered by Katz is a Laurent polynomial of the following form: 
\begin{equation}\label{G}
G(t_0,\ldots, t_n):=t_0^A f(t_1, \ldots, t_n) +g(t_1, \ldots, t_n)+P_B(1/t_0) \in k[t_0^{\pm}, t_1,..., t_n]
\end{equation}
where $f(t_1, \ldots, t_n)$ is a Deligne polynomial of degree $d$, $g(t_1, \ldots, t_n)$ is a polynomial of degree $<d$, 
and $P_B(s)$ is a one-variable polynomial of degree $\leq B$. It can be viewed as a one-parameter 
family of Deligne polynomials parametrized by $t_0$ over the torus ${\mathbb G}_m$. We stress that for flexibility of applications, 
the degree of $P_B$ is only assumed to be at most $B$.  Later on we distinguish the two cases when the degree 
is exactly $B$ and when the degree is strictly less than $B$. 

Fix a nontrivial additive character $\psi: (k,+)\to {\mathbb C}^*$. For a Deligne polynomial $f(t_1,..., t_n)$ as above, 
one has Deligne's fundamental estimate (\cite[3.7.2.1]{D})
$$\big| \sum_{t\in k^n} \psi(f(t)) \big| \leq (d-1)^n q^{n/2}.$$
Define the $L$-function by 
$$L(f, T) = \exp\left(\sum_{m=1}^{\infty} \frac{T^m}{m}\sum_{t\in k_m^n} \psi\Big({\rm Tr}_{k_m/k}(f(t)\Big)\right),$$
where $k_m$ denotes the extension of $k$ of degree $m$.
Deligne shows that $L(f, T)^{(-1)^{n-1}}$ is a polynomial of degree $(d-1)^n$ pure of weight $n$. 
From the $p$-adic point of view, Sperber \cite{S} further shows that the $q$-adic Newton polygon of the polynomial $L(f, T)^{(-1)^{n-1}}$
lies above a certain lower bound called the Hodge polygon, which is defined to be the $q$-adic Newton polygon of the polynomial 
$$h(T)=\prod_{1\leq j_1, \ldots, j_n\leq d-1} (1- q^{\frac{j_1}{d}+\cdots + \frac{j_n}{d}}T).$$
These two polygons coincide 
for a generic Deligne polynomial $f$ of degree $d$ over $\bar{k}$ if $p\equiv 1 \mod d$, that is, the universal 
family of Deligne polynomials of degree $d$ in $n$ variables is generically ordinary for its 
$L$-function when $p\equiv 1 \mod d$. 
This is because  for the diagonal polynomial 
$$f(t)= t_1^d +\ldots + d_n^d,$$
the classical Stickelberger theorem for the Gauss sum implies that the Newton polygon equals to the Hodge polygon 
if $p\equiv 1 \mod d$. 

Motivated by applications in analytic number theory, it is of interest to study the exponential sum for the 
above $(A,B)$-polynomial $G(t_0, \ldots, t_n)$. If one applies Deligne's estimate fibre by fibre to the exponential sum 
for the $(A, B)$-polynomial, one gets the 
``trivial'' bound 
$$\Big \vert \sum_{t_0\in k^*}\sum_{t_1,\ldots, t_n\in k} \psi\Big(G(t_0,\ldots, t_n)\Big)\Big\vert
\leq (q-1) (d-1)^n q^{n/2} \leq (d-1)^n q^{(n+2)/2}.$$ 
This is already of considerable depth, but is still weaker than the expected square root cancellation as $q$ varies. 
 Using $\ell$-adic cohomology, Katz \cite{K} proves the following optimal 
square root estimate for the $(A,B)$-exponential sum 
over ${\mathbb G}_m \times {\mathbb A}^n$. 

\begin{theorem}\label{I} Suppose that $f$ is a Deligne polynomial of degree $d$ prime to $p$, 
$p$ is prime to $AB$, $\deg(P_B)=B$ and $\mathrm{deg}(g)<\frac{Bd}{A+B}$. 
For the $(A,B)$-polynomial $G(t_0,\ldots, t_n)$ defined in equation (\ref{G}), we have the estimate 
$$\Big \vert \sum_{t_0\in k^*}\sum_{t_1,\ldots, t_n\in k} \psi\Big(G(t_0,\ldots, t_n)\Big)\Big\vert
\leq (A+B)(d-1)^n q^{(n+1)/2}.$$ 
\end{theorem}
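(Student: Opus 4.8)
The plan is to express the exponential sum as a trace on $\ell$-adic cohomology and then bound the individual Betti numbers, using the hypothesis $\deg(g)<\frac{Bd}{A+B}$ to control the geometry at infinity in the $t_0$-direction. Write $S(G) = \sum_{t_0\in k^*}\sum_{t_1,\ldots,t_n\in k}\psi(G(t_0,\ldots,t_n))$. Consider the Artin--Schreier sheaf $\mathcal{L}_\psi(G)$ on $\mathbb{G}_m\times\mathbb{A}^n$; by the Grothendieck--Lefschetz trace formula, $S(G) = \sum_i (-1)^i \operatorname{Tr}(\mathrm{Frob}_q \mid H^i_c(\mathbb{G}_{m,\bar k}\times\mathbb{A}^n_{\bar k}, \mathcal{L}_\psi(G)))$. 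The first task is to show that the cohomology is concentrated in a single degree (namely degree $n+1$), so that $H^{n+1}_c$ carries the whole sum, and that Frobenius acts on it with all eigenvalues of weight $\leq n+1$; Deligne's main theorem in Weil~II gives the weight bound once we know the sheaf is (a shift of) a perverse pure object, so the real content is the concentration in degree $n+1$ together with a dimension count $\dim H^{n+1}_c \leq (A+B)(d-1)^n$.

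The key geometric step is to understand the map $\pi\colon \mathbb{G}_m\times\mathbb{A}^n\to\mathbb{G}_m$ projecting to $t_0$, and its fibres. For fixed $t_0 = a\in\bar k^*$, the restriction of $G$ is $a^A f(t) + g(t) + (\text{constant})$, which is a Deligne polynomial of degree $d$ in $t_1,\ldots,t_n$ because $\deg(g)<d$ and $f$ is Deligne (the leading form is $a^A f_d$, still defining a smooth hypersurface). Hence $R^i\pi_!\mathcal{L}_\psi(G) = 0$ for $i\neq n$, and $R^n\pi_!\mathcal{L}_\psi(G) =: \mathcal{F}$ is a lisse sheaf on $\mathbb{G}_m$ of rank $(d-1)^n$, pure of weight $n$, by Deligne's theorem on Deligne polynomials quoted in the excerpt. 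By the Leray spectral sequence it then suffices to compute $H^*_c(\mathbb{G}_{m,\bar k},\mathcal{F})$; since $\mathcal{F}$ is lisse of rank $(d-1)^n$ on the curve $\mathbb{G}_m$ with no punctual sections, $H^0_c = 0$, and $H^2_c = \mathcal{F}_{\bar\eta,\,\mathrm{coinv}}$ vanishes provided $\mathcal{F}$ has no trivial quotient — which follows because the $\psi$-twist forces nontriviality at the boundary (or can be arranged by a direct argument). Thus everything lands in $H^1_c(\mathbb{G}_{m,\bar k},\mathcal{F})$, which sits in degree $n+1$ after the shift, and by the Euler--Poincaré formula on $\mathbb{G}_m$,
\begin{equation*}
\dim H^1_c(\mathbb{G}_{m,\bar k},\mathcal{F}) = -\chi_c(\mathbb{G}_m,\mathcal{F}) = \mathrm{Sw}_0(\mathcal{F}) + \mathrm{Sw}_\infty(\mathcal{F}) + (\text{rank drop at }0,\infty).
\end{equation*}

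So the crux reduces to bounding the Swan conductors of $\mathcal{F} = R^n\pi_!\mathcal{L}_\psi(G)$ at $t_0 = 0$ and $t_0 = \infty$. At $t_0=0$, the term $P_B(1/t_0)$ has a pole of order $\leq B$, while $t_0^A f(t)$ degenerates; the hypothesis $\deg(g) < \frac{Bd}{A+B}$ is exactly what guarantees that, after the natural rescaling of the $t_i$ to balance the Newton polytope in the $(A,B)$-polynomial, the local monodromy at $0$ is totally wild with Swan conductor contributing the $B$ part of $A+B$; at $t_0=\infty$, the term $t_0^A f(t)$ has a pole of order $A$ after rescaling, contributing the $A$ part. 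I expect the main obstacle to be precisely this local analysis at the two punctures: one must pass to a ramified cover $t_0 \mapsto t_0^{A+B}$ (or use the change of variables $t_i \mapsto t_0^{-B/(A+B)}t_i$, available after base change), reduce to an exponential sum with a nondegenerate Newton polytope, and invoke either a stationary phase/Laumon local Fourier transform computation or the Adolphson--Sperber degree bound to get $\mathrm{Sw}_0 + \mathrm{Sw}_\infty + (\text{drops}) \leq (A+B)(d-1)^n - (\text{something})$, clean enough to yield the stated constant $(A+B)(d-1)^n$. The inequality $\deg(g)<\frac{Bd}{A+B}$ is what keeps $g$ from disturbing the Newton polytope at $t_0 = 0$, and the assumption $\deg(P_B)=B$ ensures the pole order at $0$ is exactly $B$ rather than smaller, so both hypotheses enter precisely in this step.
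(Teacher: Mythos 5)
Your reduction is set up correctly and matches the first step of the paper: by the trace formula and Deligne's fibrewise theorem (each fibre $t_0^Af+g+\mathrm{const}$ is a Deligne polynomial since $\deg g<d$), one gets $R^i\pi_!G^*\mathcal L_\psi=0$ for $i\neq n$ and $\mathcal F=R^n\pi_!G^*\mathcal L_\psi$ lisse of rank $(d-1)^n$, pure of weight $n$, so everything hinges on $H^*_c(\mathbb G_{m,\bar k},\mathcal F)$. But from that point on your argument has a genuine gap: the whole content of the theorem is the bound $\dim H^1_c\le (A+B)(d-1)^n$ together with $H^2_c=0$ (without which weight-$(n+2)$ eigenvalues would destroy the estimate), and in your outline both of these are reduced to a local analysis of the breaks of $\mathcal F$ at $t_0=0$ and $t_0=\infty$ which you do not carry out — you explicitly call it ``the main obstacle'' and only list candidate tools (ramified covers, stationary phase, local Fourier transform, Adolphson--Sperber). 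The needed facts are: the breaks at $0$ of $R^n\pi_!\mathcal L_\psi(t_0^Af+g)$ are strictly less than $B$ (this is exactly where $\deg g<\frac{Bd}{A+B}$, i.e.\ $\frac{eA}{d-e}<B$, enters, and it also gives total wildness at $0$ after twisting by $\mathcal L_\psi(P_B(1/t_0))$, hence $H^2_c=0$, which you assert only vaguely), and the breaks at $\infty$ are at most $A$. None of this is proved in the proposal, so the stated constant is not established. A further concrete problem: the device you propose for the local analysis — the substitution $t_i\mapsto t_0^{-B/(A+B)}t_i$ or the cover $t_0\mapsto t_0^{A+B}$ — is not available (not separable/\'etale in the required sense) when $p\mid A+B$, and the theorem as stated only assumes $p\nmid AB$; reintroducing $p\nmid(A+B)$ would recover only Katz's original result, whereas removing that hypothesis is precisely the point of the paper.

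For comparison, the paper takes a different route that avoids Swan conductors altogether. When $f$ is affine-Dwork-regular it verifies that $G$ is commode and nondegenerate with respect to its Newton polytope at $\infty$ (the simplex with vertices $(-B,0,\ldots,0)$, $(A,0,\ldots,0)$, $(A,d,0,\ldots,0),\ldots$; the hypothesis on $\deg g$ puts the exponents of $g$ in the interior), and quotes Denef--Loeser to get concentration in degree $n+1$, the exact Betti number $(A+B)(d-1)^n$ via a volume computation, and purity. For a general Deligne $f$ it then uses the family $f+a$ (affine-Dwork-regular for all but finitely many $a$), a Deligne--Fourier transform in the $t_0^A$-direction, and perversity of the Fourier transform to get semicontinuity of $\dim H^1_c$ under specialization to $a=0$, plus a weight argument ($j_!\mathcal F\cong j_*\mathcal F$ from purity of $H^1_c$ at good parameters) to kill $H^0$ and $H^2_c$. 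If you want to complete your own route, you would have to supply the break computations at the two punctures and the vanishing of coinvariants by hand, and find a way around the $p\mid A+B$ obstruction; as it stands, the decisive steps are missing.
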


By a standard reduction procedure, we can always reduce the exponential sum to the case where $AB$ is not divisible by $p$, unless $P_B(t_0)$ is reduced to a constant.  
Thus the condition that $p$ is prime to $AB$ is not essential. 
The condition $\mathrm{deg}(g)<\frac{Bd}{A+B}$ is necessary to ensure that the Betti number is bounded by $(A+B)(d-1)^n$. 
There is an extra condition that $p$ is prime to $A+B$ in Katz's 
original theorem. In this paper, we give a proof of the above theorem using a theorem of Denef-Loeser, removing this 
extra assumption that $p$ is prime to $A+B$.  

If $\deg(P_B)=B$ and $f$ is affine Dwork regular (see section $2$ for its definition), 
the relevant cohomology is pure (Katz's $(A,B)$ purity theorem). 
In this case, we check that the $(A,B)$-polynomial $G(t_0,\ldots, t_n)$ is non-degenerate and commode
(with respect to $t_1, \ldots, t_n$) so that we can apply the theorem of Denef-Leoser 
to deduce the purity and to calculate the exact Betti number.  
If $\deg(P_B)=B$ but $f$ is only assumed to be a Deligne polynomial,  the $(A,B)$-polynomial $G(t_0,\ldots, t_n)$ may not be non-degenerate  
and Denef-Loeser's theorem may not apply. In this case, the shifted polynomial $f(x) +a$ is affine Dwork 
regular for most $a\in \bar{k}$. Following Katz, we use a specialization and perverse argument to show that the same estimate 
remains true.  The relevant cohomology in the degenerate case is mixed, and the number $(A+B)(d-1)^n$ is 
only an upper bound for the Betti number. 

Assume $\mathrm{deg}(P_B)=h\leq B$. Theorem \ref{I} applies only when $\deg(g)=e< hd/(A+h)$ and we get 
$$\Big \vert \sum_{t_0\in k^*}\sum_{t_1,\ldots, t_n\in k} \psi\Big(G(t_0,\ldots, t_n)\Big)\Big\vert
\leq (A+h)(d-1)^n q^{(n+1)/2}.$$ 
It would be interesting to extend the theorem to the case when $dh/(A+h) \leq e <d$. 
This cannot be done in general, see \cite[Remark 5.5]{K}. However, 
Katz has a trick to make it work  if $f(x)$ is affine 
Dwork regular. His idea is to choose a larger $B$ so that $e< Bd/(A+B)$ and 
consider the family $P_B(t_0) + bt_0^B$ parametrized by $b\in \bar{k}$.  
Using a similar specialization and perverse argument, 
the same theorem can still be proved. 
We obtain the following theorem, again proved first by Katz under the extra assumption that $p$ is relatively prime to 
$A+B$. 

\begin{theorem}\label{II} Suppose that $f$ is affine Dwork regular of degree $d$ prime to $p$, 
$p$ is prime to $AB$, $\deg(P_B)\leq B$ and $\mathrm{deg}(g)=e<\frac{Bd}{A+B}$. Then we have 
$$\Big \vert \sum_{t_0\in k^*}\sum_{t_1,\ldots, t_n\in k} \psi\Big(G(t_0,\ldots, t_n)\Big)\Big\vert
\leq (A+B)(d-1)^n q^{(n+1)/2}.$$ 
\end{theorem}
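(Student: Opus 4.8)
The plan is to deduce the bound from Theorem~\ref{I} by a specialization in an auxiliary parameter: realize $G$ as the special fibre of a one-parameter family whose other fibres fall under Theorem~\ref{I}, and control that special fibre by Verdier duality and a perversity argument. Over $S:=\mathbb A^1=\mathrm{Spec}\,\bar k[b]$ put $G_b:=G+b\,t_0^{-B}$ on $X:=\mathbb G_m\times\mathbb A^n\times S$; for $b\neq 0$ the one-variable part $P_B(1/t_0)+b\,t_0^{-B}$ has degree exactly $B$, so, exactly as in the affine Dwork regular case of Theorem~\ref{I} and using the standing hypothesis $e<Bd/(A+B)$, the polynomial $G_b$ is non-degenerate and commode with respect to $t_1,\dots,t_n$ and the Denef--Loeser theorem applies to it, whereas $G_0=G$. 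Let $\pi\colon X\to S$ be the projection and $\mathcal L_\psi(G_b)$ the Artin--Schreier sheaf of $(t,b)\mapsto G_b(t)$ on $X$. By proper base change and Grothendieck--Lefschetz the sum in question equals $\sum_m(-1)^m\mathrm{Tr}\bigl(\mathrm{Frob}_k\mid H^m_c(\mathbb G_m\times\mathbb A^n,\mathcal L_\psi(G))\bigr)$, so it suffices to show that this cohomology vanishes for $m\neq n+1$, has dimension $\leq(A+B)(d-1)^n$ for $m=n+1$, and is of weight $\leq n+1$.

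\textbf{Fibring over $t_0$.} Via the projection $p\colon\mathbb G_m\times\mathbb A^n\to\mathbb G_m$ onto $t_0$, each slice $t_0=c$ gives a Deligne polynomial $c^Af(t)+g(t)$ (plus a constant) of degree $d$, whose leading form $c^Af_d$ is smooth and with $\deg g<d$; hence $R^ip_!\mathcal L_\psi(G)$, $R^ip_*\mathcal L_\psi(G)$, and their $\bar\psi$-analogues vanish for $i\neq n$ and the middle ones are lisse of rank $(d-1)^n$ on $\mathbb G_m$. Moreover $R^np_!\mathcal L_\psi(G)$ is totally wild at $t_0=\infty$---forced by the term $t_0^Af$ with $f$ affine Dwork regular and $p\nmid A$, by the local analysis at $\infty$ behind Theorem~\ref{I}---hence has no geometrically constant sub or quotient, so $H^0(\mathbb G_m,R^np_!\mathcal L_\psi(G))=H^2_c(\mathbb G_m,R^np_!\mathcal L_\psi(G))=0$ and, by the Leray spectral sequence for $p$, $R\Gamma_c(\mathbb G_m\times\mathbb A^n,\mathcal L_\psi(G))$ is concentrated in degree $n+1$. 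The same argument, fibring $X$ over $\mathbb G_m\times S$, applies to $R\pi_!\mathcal L_\psi(G_b)$ and, with $\psi$ replaced by $\bar\psi$, to $R\pi_*\mathcal L_{\bar\psi}(G_b)$; write $R\pi_!\mathcal L_\psi(G_b)=\mathcal M[-(n+1)]$ and $R\pi_*\mathcal L_{\bar\psi}(G_b)=\mathcal N[-(n+1)]$ with $\mathcal M,\mathcal N$ constructible sheaves on $S$.

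\textbf{Perversity and the special stalk.} Since $\pi$ is affine and $\mathcal L_\psi(G_b)[n+2]$ is perverse on $X$ ($\dim X=n+2$), Artin's theorem gives $\mathcal M[1]=R\pi_!\bigl(\mathcal L_\psi(G_b)[n+2]\bigr)\in{}^pD^{\leq 0}(S)$; on the other hand $\mathbb D_S\bigl(\mathcal M[1]\bigr)=R\pi_*\,\mathbb D_X\bigl(\mathcal L_\psi(G_b)[n+2]\bigr)=R\pi_*\bigl(\mathcal L_{\bar\psi}(G_b)[n+2](n+2)\bigr)=\mathcal N[1](n+2)$, and a single sheaf placed in degree $-1$ on the curve $S$ lies automatically in ${}^pD^{\leq 0}(S)$, whence $\mathcal M[1]\in{}^pD^{\geq 0}(S)$ as well. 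So $\mathcal M[1]$ is a perverse sheaf on $S$; being a single sheaf shifted by one, this forces $\mathcal M$ to have no punctual subsheaf, so the adjunction $\mathcal M\hookrightarrow j_*(\mathcal M|_{\mathbb G_m})$ (where $j\colon\mathbb G_m\hookrightarrow S$) is injective. For $b\neq 0$, Denef--Loeser shows $\mathcal M|_{\mathbb G_m}$ is lisse of rank $(A+B)(d-1)^n$, so $\mathcal M_{\bar 0}\hookrightarrow\bigl(\mathcal M|_{\mathbb G_m}\bigr)_{\bar\eta}^{I_0}$ has dimension $\leq(A+B)(d-1)^n$. Since $\mathcal M_{\bar 0}=H^{n+1}_c(\mathbb G_m\times\mathbb A^n,\mathcal L_\psi(G))$ is, by the previous paragraph, the only nonzero cohomology of the fibre and has weight $\leq n+1$ by Deligne, the Grothendieck--Lefschetz trace formula gives the asserted bound $(A+B)(d-1)^n q^{(n+1)/2}$. (If $\deg(P_B)=0$ one first applies the standard reduction noted after Theorem~\ref{I}.)

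\textbf{Main obstacle.} Everything hinges on the fibre $b=0$, where $G$ itself may be degenerate and Denef--Loeser says nothing directly; the construction exists precisely to carry the generic Betti bound across that point. The two substantive ingredients are: the concentration-plus-total-wildness package of the second paragraph---this is where the hypotheses ``$f$ affine Dwork regular'' and ``$p\nmid A$'' genuinely enter, and it reuses part of the local analysis at $t_0=\infty$ contained in the proof of Theorem~\ref{I}; and the perversity of $\mathcal M[1]$, which is exactly what prevents the special stalk from jumping above the generic rank and is the substitute for Katz's ``specialization and perverse argument.'' An alternative that avoids the family is to compute $R^np_!\mathcal L_\psi(G)=\mathcal L_{\psi(P_B(1/t_0))}\otimes R^np_!\mathcal L_\psi(t_0^Af+g)$ directly on $\mathbb G_m$: the hypothesis $e<Bd/(A+B)$ is precisely what keeps the break of $R^np_!\mathcal L_\psi(t_0^Af+g)$ at $t_0=0$ below $B$, so that $\dim H^1_c(\mathbb G_m,R^np_!\mathcal L_\psi(G))=\mathrm{Sw}_0+\mathrm{Sw}_\infty\leq(A+B)(d-1)^n$; but carrying this out amounts to redoing the Swan-conductor computations behind Theorem~\ref{I}.
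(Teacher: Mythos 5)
Your overall strategy is the same as the paper's: embed $G$ in the one-parameter family $G_b=G+bt_0^{-B}$, apply the non-degenerate (Denef--Loeser) case for $b\neq 0$, and use perversity over the $b$-line to prevent the stalk at $b=0$ from exceeding the generic rank $(A+B)(d-1)^n$. Indeed your $\mathcal M[1]$ is literally the object the paper studies, written there as the Fourier transform $\mathfrak F(j_![t^{-B}]_!\mathcal F_\chi[1])$, whose perversity is taken from Laumon. The genuine gap is in your second paragraph: the claim that $R^np_!\mathcal L_\psi(G)$ is \emph{totally wild at $t_0=\infty$} is not ``part of the local analysis behind Theorem~\ref{I}'' in this paper, because the paper's proof of Theorem~\ref{I} is purely an application of Denef--Loeser's non-degeneracy theorem and contains no local monodromy analysis at $\infty$ at all. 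Total wildness at $\infty$ for affine Dwork regular $f$ is a substantive theorem of Katz (proved by local Fourier transform/stationary phase arguments), and it is exactly the kind of input this paper is designed to avoid. Without proving it, your concentration of $R\Gamma_c(\mathbb G_m\times\mathbb A^n,\mathcal L_\psi(G))$ in degree $n+1$ at the special fibre $b=0$ (hence the very definition of $\mathcal M$ as a single sheaf, and the final trace-formula step) is unestablished. The paper instead gets the vanishing of $H^0(\mathbb G_{m,\bar k},\mathcal F)$ and $H^2_c(\mathbb G_{m,\bar k},\mathcal F)$ from a weight argument: purity of $H^1_c(\mathbb G_{m,\bar k},\mathcal F\otimes[bt^{-B}]^*\mathcal L_\psi)$ for $b\neq 0$ (Theorem~\ref{dl}(ii)) combined with Lemma~\ref{pure} forces $j'_!\mathcal F\cong j'_*\mathcal F$ across $t_0=\infty$, which kills the inertia invariants there without any wildness statement. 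You would either need to reproduce that weight argument or supply an independent proof of the wildness claim; note your alternative ``Swan-conductor'' route at the end has the same status.

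There is also a technical slip in the perversity step. For the affine morphism $\pi$, Artin's theorem gives that $R\pi_!$ of a perverse sheaf lies in ${}^pD^{\geq 0}(S)$ (it is $R\pi_*$ that is right t-exact, $R\pi_!$ left t-exact), whereas the inclusion in ${}^pD^{\leq 0}(S)$ is the half that is automatic once you know $\mathcal M[1]$ is a single sheaf placed in degree $-1$ on the curve $S$; you have attributed each half to the other ingredient. As written, your Verdier-duality detour requires $R\pi_*\mathcal L_{\bar\psi}(G_b)$ to be concentrated in degree $n+1$, which does not follow from fibrewise vanishing since there is no proper base change for $R\pi_*$. Swapping the two halves repairs this part: Artin gives ${}^pD^{\geq 0}$ (hence no punctual subsheaf, which is what bounds $\mathcal M_{\bar 0}$ by the generic rank), and concentration of $R\pi_!$ -- once the $b=0$ fibre issue above is settled -- gives ${}^pD^{\leq 0}$. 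With those two repairs your argument becomes a correct variant of the paper's proof of Corollary~\ref{K}, case (b), differing only in that it works directly with $R\pi_!$ over the $b$-line rather than through the Deligne--Fourier transform.
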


As mentioned above, if $\deg(P_B)=h$ and $e\geq hd/(A +h)$, then Theorem \ref{I} does not apply. 
We can choose a larger $B\geq h$ so that $B$ is prime to $p$ and $e <Bd/(A+B)$. Then Theorem \ref{II} 
will apply with this larger $B$ at the expense of increasing the constant $A+h$ to $A+B$.     
For fixed $A$, there are many choices of $(A,B)$ such that $e< Bd/(A+B)$ and $h\leq B$. We want to 
choose such $B$ prime to $p$ such that $A+B$ (equivalently $B$) is as small as possible. 

Consider the case 
$h\leq 1$, that is, $P_B$ is at most 
a linear polynomial (possibly a constant).  
For each positive integer $A$, the smallest positive integer $B$ satisfying $e< Bd/(A+B)$ is the smallest 
integer $B$ that is greater than $eA/(d-e)$. Thus the coefficient in the estimate of the above 
theorem can be taken to be 
$$\Big(A+\Big[ \frac{eA}{d-e}\Big]+1\Big)(d-1)^n.$$ 

We can improve the Betti number estimate $(A+B)(d-1)^n$ of Theorem \ref{II} in the case $e\geq hd/(A +h)$ 
(equivalently $h\leq eA/(d-e)$) as follows. 
The case $e< hd/(A +h)$ is handled by Theorem \ref{I} and the Betti number estimate is optimal already. 

\begin{theorem}\label{III} Suppose that $f$ is affine Dwork regular of degree $d$ prime to $p$,
$p$ is prime to $A$, $\deg(P_B)=h$, $\deg(g)=e$, and $d> e\geq hd/(A +h)$. Then we have 
$$\Big \vert \sum_{t_0\in k^*}\sum_{t_1,\ldots, t_n\in k} \psi\Big(G(t_0,\ldots, t_n)\Big)\Big\vert
\leq \Big( \Big(A+ \frac{eA}{d-e}\Big)(d+1)^{n} -\Big (\frac{eA}{d-e}-h\Big)(e+1)^{n}\Big)q^{(n+1)/2}.$$ 
\end{theorem}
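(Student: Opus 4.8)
The plan is to decompose the sum over the natural toric stratification of $\mathbb{A}^n$, estimate each piece by combining Adolphson--Sperber's cohomological vanishing theorem over a torus with Deligne's general weight inequality, and then recombine the pieces by the binomial theorem; in particular no purity statement (hence no appeal to Denef--Loeser) is needed here. Write $h=\deg P_B$ and $\beta=eA/(d-e)$, and for $S\subseteq\{1,\ldots,n\}$ let $\mathbb{T}_S$ be the locally closed subscheme of $\mathbb{A}^n$ where $t_i\neq 0$ for $i\in S$ and $t_j=0$ for $j\notin S$, so that $k^n=\bigsqcup_S\mathbb{T}_S(k)$ and
\[
\sum_{t_0\in k^*}\sum_{t\in k^n}\psi\bigl(G(t_0,t)\bigr)=\sum_S T_S,\qquad T_S:=\sum_{(t_0,t)\in(\mathbb{G}_m\times\mathbb{T}_S)(k)}\psi\bigl(G_S(t_0,t)\bigr),
\]
where $G_S=t_0^Af_S+g_S+P_B(1/t_0)$ is the Laurent polynomial obtained from $G$ by setting $t_j=0$ for $j\notin S$, regarded on the torus $\mathbb{G}_m\times\mathbb{T}_S\cong\mathbb{G}_m^{1+|S|}$. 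Since $f$ is affine Dwork regular, each coordinate restriction $f_S$ is again affine Dwork regular of degree $d$ in the variables $(t_i)_{i\in S}$ (cf.\ Section~2); together with $p\nmid A$ and, after the standard reduction so that no monomial of $P_B$ or of $g$ is a $p$-th power, this is exactly what is needed to check that $G_S$ is non-degenerate with respect to its Newton polytope $\Delta(G_S)\subset\mathbb{R}^{1+|S|}$ at infinity.

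Granting this, Adolphson--Sperber's theorem gives $H^i_c\bigl(\mathbb{G}_m^{1+|S|},\mathcal{L}_\psi(G_S)\bigr)=0$ for $i\neq 1+|S|$ and
\[
\dim H^{1+|S|}_c\bigl(\mathbb{G}_m^{1+|S|},\mathcal{L}_\psi(G_S)\bigr)=(1+|S|)!\,\mathrm{vol}\bigl(\Delta(G_S)\bigr),
\]
$\mathrm{vol}$ denoting Lebesgue measure. As $\mathcal{L}_\psi(G_S)$ is lisse and pure of weight $0$ on the smooth variety $\mathbb{G}_m^{1+|S|}$, Deligne's inequality forces $H^{1+|S|}_c$ to be mixed of weight $\leq 1+|S|$, so every Frobenius eigenvalue on it has absolute value at most $q^{(1+|S|)/2}$; since it is the only non-vanishing compactly supported cohomology group, the Grothendieck--Lefschetz trace formula yields
\[
|T_S|\leq (1+|S|)!\,\mathrm{vol}\bigl(\Delta(G_S)\bigr)\cdot q^{(1+|S|)/2}.
\]

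It remains to bound the volume uniformly in $S$. The support of $G_S$ shows that $\Delta(G_S)$ is contained in the polytope $\Delta_{|S|}\subset\mathbb{R}^{1+|S|}$ that is the convex hull of the origin, the point $(-h,0,\ldots,0)$, the dilated standard simplex $d\Sigma_{|S|}$ placed in the hyperplane where the $t_0$-exponent equals $A$, and the dilated standard simplex $e\Sigma_{|S|}$ placed in the hyperplane where the $t_0$-exponent equals $0$. The hypothesis $e\geq hd/(A+h)$ is equivalent to $\beta\geq h$, and it guarantees that the section of $\Delta_{|S|}$ at $t_0$-exponent $0$ is exactly $e\Sigma_{|S|}$; slicing $\Delta_{|S|}$ perpendicular to the $t_0$-axis — a frustum between $e\Sigma_{|S|}$ and $d\Sigma_{|S|}$ for $t_0$-exponent in $[0,A]$, and the cone on $e\Sigma_{|S|}$ with apex $(-h,0,\ldots,0)$ for $t_0$-exponent in $[-h,0]$ — and integrating, one computes
\[
(1+s)!\,\mathrm{vol}(\Delta_{s})=(A+\beta)\,d^{\,s}-(\beta-h)\,e^{\,s}\qquad(s=0,1,\ldots,n),
\]
which is $\geq 0$ because $d>e$ and $0\leq h\leq\beta$.

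Combining the displays, using $q^{(1+|S|)/2}\leq q^{(n+1)/2}$, and summing over the $2^n$ strata by the binomial theorem,
\[
\Bigl|\sum_{t_0\in k^*}\sum_{t\in k^n}\psi\bigl(G(t_0,t)\bigr)\Bigr|\leq\sum_S|T_S|\leq q^{(n+1)/2}\sum_{s=0}^n\binom{n}{s}\bigl((A+\beta)\,d^{\,s}-(\beta-h)\,e^{\,s}\bigr)=\bigl((A+\beta)(d+1)^n-(\beta-h)(e+1)^n\bigr)q^{(n+1)/2},
\]
which is the asserted estimate since $A+\beta=A+\tfrac{eA}{d-e}$ and $\beta-h=\tfrac{eA}{d-e}-h$. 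I expect the main obstacle to be the non-degeneracy check in the first paragraph: one must go through all faces $\tau$ of $\Delta(G_S)$ not containing the origin and verify that $G_S$ restricted to $\tau$ has no critical point on the torus. The faces supported at $t_0$-exponent $A$ reduce to the smoothness of $f_d$ on the various coordinate subspaces, which is exactly what affine Dwork regularity provides; the remaining faces — those interpolating between $t_0$-exponent $A$ and the vertex $(-h,0,\ldots,0)$, and the vertex faces themselves — are where $p\nmid A$ and the absence of $p$-th-power monomials enter. Once this is established, the volume computation and the final summation are routine.
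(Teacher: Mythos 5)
Your toric decomposition, the volume computation $(1+s)!\,\mathrm{vol}(\Delta_s)=(A+\beta)d^{\,s}-(\beta-h)e^{\,s}$ and the final binomial summation all agree with the paper, but the cohomological core of your argument has a genuine gap: the non-degeneracy claim for $G_S$ in your first paragraph is false in general, and it cannot be repaired by conditions on $f$, on $A$, or by removing $p$-th power monomials, because the obstruction comes from $g$. Precisely because $e\geq hd/(A+h)$ (equivalently $\beta\geq h$), the exponents of the leading form $g_e$ lie on faces of $\Delta_\infty(G_S)$ \emph{not} containing the origin --- this is exactly what makes the $t_0$-level-$0$ section of your $\Delta_s$ equal to $e\Sigma_s$, a fact you use in the volume computation. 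The restriction of $G_S$ to such a face involves $g_e$ (or a coordinate restriction of it), and $g$ is an arbitrary polynomial of degree $e$, so $g_e$ can be highly singular (say a power of a linear form), in which case the toric critical locus of the face polynomial is nonempty. This is precisely the point the paper makes after Corollary \ref{IV}: the result ``cannot be deduced from the theorem of Denef-Loeser'' because $G$ can be highly degenerate. Without non-degeneracy, the Adolphson--Sperber cohomological theorem you invoke (vanishing outside the top degree together with $\dim H^{1+|S|}_c=(1+|S|)!\,\mathrm{vol}$) does not apply, and the termwise bound $|T_S|\leq(1+|S|)!\,\mathrm{vol}(\Delta(G_S))\,q^{(1+|S|)/2}$ collapses: when cohomology can live in several degrees, Adolphson--Sperber's unconditional input only controls $\deg L^*(G_S,T)$ as a rational function, i.e.\ the difference between the numbers of zeros and poles, which does not bound the number of Frobenius eigenvalues entering $T_S$.

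The paper's proof diverges from yours exactly here, and your opening claim that ``no purity statement (hence no appeal to Denef--Loeser) is needed'' is the step that goes wrong. The paper uses the same stratification, but on each stratum it applies only the unconditional degree bound $|\deg L^*(G_S,T)|\leq (1+|S|)!\,\mathrm{vol}$ of \cite{AS}, valid with no non-degeneracy hypothesis, and sums to bound $|\deg L(G,T)|$ by $(A+\beta)(d+1)^n-(\beta-h)(e+1)^n$. To convert this degree bound into a bound on the exponential sum, it then needs to know that $H^i_c(\mathbb G_{m,\bar k}\times\mathbb A^n_{\bar k}, G^*\mathcal L_\psi)=0$ for $i\neq n+1$, so that $L(G,T)^{(-1)^n}$ is a polynomial of degree $\dim H^{n+1}_c$; this vanishing is supplied by Corollary \ref{K} under condition (b), after choosing an auxiliary $B$ prime to $p$ with $e<Bd/(A+B)$, and its proof rests on the purity statement of Theorem \ref{dl} (via Denef--Loeser) together with the Fourier-transform/perverse-sheaf specialization argument --- this is where affine Dwork regularity of $f$ and $p\nmid A$ are genuinely used. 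Only then does Deligne's weight bound $\leq q^{(n+1)/2}$ per eigenvalue give the stated estimate. So purity is indispensable; a stratum-by-stratum weight argument of the kind you propose does not go through for degenerate $g$.
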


Taking $P_B=0$, we obtain 

\begin{corollary}\label{IV} Suppose that $f(t)$ is affine Dwork regular of degree $d$ prime to $p$, 
and $p$ is prime to $A$.  For any polynomial $g(t)\in k[t_1,..., t_n]$ of degree $\deg(g)=e<d$, 
we have $$\Big \vert \sum_{t_0\in k^*}\sum_{t_1,\ldots, t_n\in k} \psi\Big(t_0^A f(t_1, \ldots, t_n) +g(t_1, \ldots, t_n)\Big)\Big\vert
\leq \Big(\Big (A+ \frac{eA}{d-e}\Big)(d+1)^{n} - \frac{eA}{d-e}(e+1)^{n}\Big)q^{(n+1)/2}.$$ 
\end{corollary}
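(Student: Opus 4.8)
The plan is to read off Corollary \ref{IV} as the boundary case $\deg(P_B)=0$ of Theorem \ref{III}. I would first take $P_B$ to be a nonzero constant $c\in k^{*}$, so that $h=\deg(P_B)=0$; then the constraint $hd/(A+h)\le e<d$ in the hypothesis of Theorem \ref{III} becomes simply $0\le e<d$, which is exactly the hypothesis $\deg(g)=e<d$ of the corollary, and the remaining hypotheses of Theorem \ref{III} --- $f$ affine Dwork regular of degree $d$ prime to $p$, and $p$ prime to $A$ --- are identical to those assumed here. Putting $h=0$ in the conclusion of Theorem \ref{III} gives exactly
$$\Big(\Big(A+\tfrac{eA}{d-e}\Big)(d+1)^{n}-\tfrac{eA}{d-e}(e+1)^{n}\Big)q^{(n+1)/2}.$$

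To pass from $P_B=c\in k^{*}$ to $P_B=0$, I would observe that replacing the summand $G$ by $G+c$ multiplies each inner sum $\sum_{t_0\in k_m^{*}}\sum_{t_1,\ldots,t_n\in k_m}\psi\big(\mathrm{Tr}_{k_m/k}(G)\big)$ by the root of unity $\psi\big(\mathrm{Tr}_{k_m/k}(c)\big)$, hence leaves every absolute value unchanged; so the estimate for $P_B=0$ coincides with the one just obtained for $P_B=c$. If $g$ is itself a constant the same remark lets us take $e=0$, the bound reducing to $A(d+1)^{n}q^{(n+1)/2}$; alternatively one checks directly that $h=0$ (and $e=0$) are admissible values throughout the proof of Theorem \ref{III}.

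The only point that genuinely needs verification is that the proof of Theorem \ref{III} --- a toric decomposition of the Newton polytope $\Delta(G)$ of $G=t_0^{A}f+g+P_B(1/t_0)$, combined with the Adolphson-Sperber bound for the degree of the $L$-function and Katz's $(A,B)$-purity to convert it into the square-root estimate --- survives the disappearance of the monomials $t_0^{-1},\ldots,t_0^{-h}$ contributed by $P_B$. I expect this to be the main obstacle, though a mild one: the $t_0$-direction of $\Delta(G)$ already has positive extent from the $t_0^{A}f$ and $g$ parts, so $\Delta(G)$ keeps the dimension it had in Theorem \ref{III} and simply loses its portion in the half-space $t_0<0$ when $h=0$; the non-degeneracy and commode conditions used there persist; and the volume computation producing the displayed coefficient is exactly its $h=0$ specialization. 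Hence nothing beyond tracking $h=0$ through the argument of Theorem \ref{III} is required.
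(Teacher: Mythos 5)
Your proposal is correct and takes essentially the same route as the paper, which obtains the corollary simply by specializing Theorem \ref{III} to the case $P_B=0$ (i.e.\ $h=0$), the hypotheses and the displayed bound being exactly the $h=0$ instance. Your detour through a nonzero constant $P_B=c$ followed by dividing out the unit factor $\psi(c)$ only tidies up the degree convention for the zero polynomial and adds nothing substantive beyond the paper's argument.
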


As indicated above, if $e< Bd/(A+B)$, the coefficient in this error term can be smaller than 
the coefficient $(A+B)(d-1)^n$ in Theorem \ref{II}. This indicates that the estimate in Theorem \ref{III} 
is generally better than the estimate in Theorem \ref{II} in the case $e\geq hd/(A +h)$.

We remark that this corollary cannot be deduced from the theorem of Denef-Loeser as the leading form $g_e$ of the polynomial $g(t_1,\ldots, t_n)$ 
can be highly singular so that the Laurent polynomial $G(t_0,\ldots, t_n)=t_0^Af(t_1,\ldots, t_n) +g(t_1,\ldots, t_n)$ 
can be highly degenerate. Our proof of Theorem \ref{III} combines the cohomological consequence 
that the $L$-function or its reciprocal is a polynomial together with toric decomposition and Adolphson-Sperber's bound for the degree of
the $L$-functions of toric exponential sums. 

In this paper, we also study the generic Newton polygon for the $L$-function associated to $(A,B)$-exponential sums. 
A lower bound ${\rm HP}(\Delta)$, called the Hodge polygon, is given by Adolphson-Sperber \cite{AS2} in terms of  lattice points 
in a ``fundamental domain'' of the convex polytope $\Delta$ defined using the exponents of monomials in $G(t_0,\ldots, t_n)$. 
To get a feeling what the Hodge polygon looks like, see the end of this paper for an explicit closed formula 
in the case $A=B=1$. 
We are interested in deciding when the generic Newton polygon coincides with its lower bound, 
i.e., when the universal family of $(A,B)$-polynomials is generically ordinary for its $L$-function. 
Unlike the universal family of Deligne polynomials of degree $d$, the polytope for the universal family of $(A,B)$-polynomials is no longer a simplex.  
There is no elementary diagonal example available and the problem becomes deeper. We apply the facial decomposition theorem in \cite{W1} to 
prove the following result. 

\begin{theorem} \label{IV} For fixed positive integers $d, A, B$ relatively prime to $p$, and a 
non-negative integer $e\leq dA/(A+B)$,  the universal family of $(A,B)$-polynomial  
$G(t_0,..., t_n)$ defined in (\ref{G}) with $\deg(g)\leq e$ is generically ordinary for its $L$-function if $p\equiv 1 \mod[ A, dB]$, where  $[A, dB]$ denotes 
the least common multiple of $A$ and $dB$. 
\end{theorem}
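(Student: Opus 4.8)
The plan is to run Wan's facial decomposition theorem on the Newton polytope $\Delta=\Delta(G)$ and reduce the assertion to two simplicial pieces, each governed by a Stickelberger-type congruence, exactly as the classical congruence $p\equiv 1\bmod d$ arises for Deligne polynomials. Writing exponents of $(t_0,t_1,\dots,t_n)$ as points of $\mathbb{R}^{n+1}$ (first coordinate $=$ the $t_0$-exponent) and letting $\Delta_d=\{(u_1,\dots,u_n):u_i\ge 0,\ \sum u_i\le d\}$ be the degree-$d$ simplex, the monomials of $G$ lie in the convex hull of $\{(-B,\mathbf 0)\}$, of $\{0\}\times\Delta_e$ (from $g$), of $\{A\}\times\Delta_d$ (from $t_0^Af$), and of the segment from $\mathbf 0$ to $(-B,\mathbf 0)$ (from $P_B(1/t_0)$). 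The hypothesis on $e$ is precisely what forces the monomials of $g$ and the non-leading terms of $P_B$ to be interior, so that $\Delta=\mathrm{conv}\bigl(\{(-B,\mathbf 0)\}\cup(\{A\}\times\Delta_d)\bigr)$, independent of $g$ and of the lower-order part of $P_B$. I would then record the face structure: the origin lies in the relative interior of the edge joining $(-B,\mathbf 0)$ to $(A,\mathbf 0)$, the coordinate facets $\{w_i=0\}$ ($i\ge 1$) all contain the origin, and the only codimension-one faces of $\Delta$ avoiding the origin are $T_A:=\{A\}\times\Delta_d$ (supporting $t_0^Af$) and the slanted facet $\tau$ through $(-B,\mathbf 0)$ and $\{A\}\times\{u:\sum u_i=d\}$ (supporting $t_0^{-B}$ together with the leading form $f_d$), lying on the hyperplane $-d w_0+(A+B)\sum_{i\ge1}w_i=dB$.

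Next I would invoke the facial decomposition theorem of \cite{W1}: the universal family $M(\Delta)$ is generically ordinary for its $L$-function if and only if the two cone families $M(\Delta_1)$ and $M(\Delta_A)$ are, where $\Delta_1=\mathrm{conv}(\{\mathbf 0\}\cup T_A)$ and $\Delta_A=\mathrm{conv}(\{\mathbf 0\}\cup\tau)$ (the faces through the origin contribute trivially). A direct check shows $\Delta_1$ is the simplex with vertices $\mathbf 0$, $(A,\mathbf 0)$, $(A,d\mathbf e_1),\dots,(A,d\mathbf e_n)$, and $\Delta_A$ the simplex with vertices $\mathbf 0$, $(-B,\mathbf 0)$, $(A,d\mathbf e_1),\dots,(A,d\mathbf e_n)$; in both the origin is a genuine vertex. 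For such diagonal/simplicial families the classical Gauss-sum computation of Stickelberger (as used in \cite{AS2} and \cite{W1}) gives the criterion I would rely on: the family attached to a simplex $\langle\mathbf 0,v_1,\dots,v_n\rangle$ is generically ordinary as soon as $p\equiv 1$ modulo the exponent of the finite group $\mathbb{Z}^{n+1}/(\mathbb{Z}v_1+\cdots+\mathbb{Z}v_n)$ — this is the same mechanism that yields $p\equiv 1\bmod d$ for the universal family of Deligne polynomials.

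It then remains to bound these two exponents. For $\Delta_1$ the generating lattice $L_1$ contains $(A,\mathbf 0)$ and the differences $(A,d\mathbf e_i)-(A,\mathbf 0)=d\mathbf e_i$, so $[A,d]\cdot\mathbb{Z}^{n+1}\subseteq L_1$ and the exponent divides $[A,d]$. For $\Delta_A$ the lattice $L_A$ contains $(-B,\mathbf 0)$, hence $B$ and $AB$ times the first unit vector, and since $B(A,d\mathbf e_i)+A(-B,\mathbf 0)=(0,Bd\mathbf e_i)$, it contains $Bd\mathbf e_i$; thus $Bd\cdot\mathbb{Z}^{n+1}\subseteq L_A$ and the exponent divides $Bd$. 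As $[A,dB]$ is divisible by both $[A,d]$ and $Bd$, the hypothesis $p\equiv 1\bmod[A,dB]$ makes $M(\Delta_1)$ and $M(\Delta_A)$ generically ordinary, and the facial decomposition theorem then gives that the universal $(A,B)$-family is generically ordinary for its $L$-function.

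The main obstacle is twofold. The more routine part is the combinatorial verification that under the stated bound on $e$ the face lattice of $\Delta$ is exactly as described — so that $T_A$ and $\tau$ are the only relevant facets — together with checking that the facial decomposition theorem applies in the form needed here (the origin lies on the boundary of $\Delta$ rather than being a vertex, though it is a vertex of each $\Delta_1$ and $\Delta_A$), and that the boundary case of the inequality for $e$, in which $g_e$ sits on $\tau$, is absorbed by a further (elementary) decomposition of $\Delta_A$. The conceptual heart is the passage on the slanted facet $\tau$: its cone $\Delta_A$ mixes the torus variable $t_0$, responsible for the factor $B$, with the degree-$d$ Deligne direction, and one must confirm that the associated diagonal family is genuinely controlled by the Stickelberger congruence with modulus dividing $Bd$. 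This is exactly the point at which the absence of an ``elementary diagonal example'' for the full $(A,B)$-polytope is circumvented — after restriction to a facet the problem does become diagonal. (If one wished to sharpen the result, one would compute the exponents of $L_1$ and $L_A$ exactly, which can be strictly smaller than $[A,d]$ and $Bd$; for the stated modulus $[A,dB]$ the crude divisibility above is all that is needed.)
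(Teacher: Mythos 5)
Your proposal is correct and follows essentially the same route as the paper: the identical facial decomposition of $\Delta$ into the two simplices $\Delta_1$ (the cone over the facet $w_0=A$, carrying $t_0^Af$) and $\Delta_2$ (the cone over the slanted facet through $(-B,\mathbf 0)$ and the points $(A,d\mathbf e_i)$, carrying $t_0^Af_d+bt_0^{-B}$), combined with the facial decomposition theorem of \cite[Theorem 5.5]{W1} and generic ordinarity of the two facet families. The only divergence is in how those facet families are treated: the paper's proof invokes the parallel hyperplane decomposition of \cite[Theorem 7.5]{W1} (which even gives $p\equiv 1\bmod A$ for the first piece), whereas your Stickelberger/invariant-factor argument via the diagonal members $t_0^A(1+t_1^d+\cdots+t_n^d)$ and $t_0^A(t_1^d+\cdots+t_n^d)+t_0^{-B}$, with moduli dividing $[A,d]$ and $dB$ respectively, is exactly the alternative recorded in the paper's remark following the proof (using \cite[Corollary 2.6]{W2}) and yields the same modulus $[A,dB]$.
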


When $e>dA/(A+B)$, the polytope of the corresponding universal $(A,B)$-polynomials 
is more complicated, having three (instead of two) codimension $1$ faces not containing the 
origin, and one of them is not a simplex. The decomposition theorems in \cite{W1} can still be used to obtain similar results. 
The question is how to cleverly apply the various decomposition theorems to obtain 
an optimal (smallest) modulus $D$ for the arithmetic progression $p\equiv 1 \mod D$. 
We leave this to interested readers.  

The paper is organized as follows. In Section 1, we prove Theorem \ref{I} using Denef-Loeser's results under the assumption that 
the Deligne polynomial $f(t_0, \ldots, t_n)$ is actually affine Dwork regular. 
In this case, the related cohomology group is pure. In Section 2, we deduce Theorems \ref{I} and \ref{II} 
from the results in Section 1 using a specialization argument. In both sections, we actually work with 
the $(A, B)$-exponential sums twisted by a multiplicative character. In Section 3, we prove Theorem \ref{III}
about the improvement of the constant in the bound. In Section 4, we study the generic ordinary property 
for the $(A, B)$-polynomial and prove Theorem \ref{IV}. 


\section{Non-degenerate and pure case}

Let $k$ be a finite field of characteristic $p$ with $q$ elements and let 
$$f=\sum_{j=1}^N a_j t_1^{w_{1j}}\cdots t_n^{w_{nj}}\in k[t_1^{\pm 1}, \ldots, t_n^{\pm 1}]$$
be a Laurent polynomial. 
Define the Newton polytope $\Delta_\infty(f)$ of $f$ at $\infty$ to be 
the convex hull of $\{0, \mathbf w_1, \ldots, \mathbf w_N\}$ in $\mathbb R^n$, where 
$\mathbf w_j=(w_{1j},\ldots, w_{nj})\in\mathbb Z^n$. We say $f$ is \emph{nondegenerate with respect to} 
$\Delta_\infty(f)$ if for any face $\tau$ of $\Delta_\infty(f)$ not containing the origin, the subscheme of
$\mathbb G_m^n$ defined by 
$$\frac{\partial f_\tau}{\partial t_1}=\cdots=\frac{\partial f_\tau}{\partial t_n}=0$$ is empty, where 
$$f_\tau=\sum_{\mathbf w_j\in \tau}a_j t_1^{w_{1j}}\cdots t_n^{w_{nj}}.$$ 

Suppose $f\in k[t_1^{\pm 1}, \ldots,t_r^{\pm 1}, t_{r+1},\ldots, t_n]$ is a polynomial with respect to the coordinates 
$t_{r+1}, \ldots, t_n$. We say $f$ is \emph{commode with respect to the coordinates} $t_{r+1}, \ldots, t_n$ 
if for any subset $S\subset \{r+1, \ldots, n\}$, we have
$$\mathrm{dim}\Big( \Delta_\infty(f)\cap  \{(w_1, \ldots, w_n)\in\mathbb R^n:\, w_j=0 \hbox { for all } j\in S\}\Big)= n-\# S.$$ 

Suppose $f\in k[t_1, \ldots, t_n]$ is a polynomial of degree $d$. Recall that $f$ is a \emph{Deligne polynomial} if 
$d$ is relatively prime to $p$, and the homogeneous degree $d$ part $f_d$ of $f$ defines a smooth hypersurface $f_d=0$ in 
$\mathbb P^{n-1}$. A homogeneous polynomial $F(t_1, \ldots, t_n)$ is called \emph{Dwork-regular with respect to 
the coordinates} $t_1, \ldots, t_n$ if the subscheme of $\mathbb P^{n-1}$ defined by 
$$F=t_1\frac{\partial F}{\partial t_1}=\cdots=t_n\frac{\partial F}{\partial t_n}=0$$ is empty. If 
the degree $d$ of $F$ is prime to $p$, we may omit the condition $F=0$ since 
$$dF=\sum_i t_i\frac{\partial F}{\partial t_i}.$$
Again under the assumption that $(d, p)=1$, 
$F$ is Dwork-regular if and only if for any nonempty subset $S\subset \{1, \ldots, n\}$, the homogeneous polynomial 
$F_S$ obtained from $F$ by setting $t_i=0$ $(i\not\in S)$ is a nonzero Deligne polynomial in the variables $t_i$ $(i\in S)$.
Dwork shows that if $F$ is a homogenous Deligne polynomial, then there exists a finite extension 
$k'$ of $k$ and a matrix $(a_{ij})\in \mathrm{GL}(n, k')$ such that $F$ is Dwork-regular with respect to the coordinates
$Y_i=\sum_j a_{ij} X_j$. Confer \cite[Lemma 3.1]{K}. 

\begin{proposition} Let $f$ be a polynomial of degree $d$ prime to $p$, and let $f_d$ be the homogeneous degree $d$ part of $f$. 
If $f_d$ is Dwork regular, then $f$ is commode, its Newton polytope $\Delta_\infty(f)$ at $\infty$  is the simplex with vertices 
$$(0, \ldots, 0), (d, 0, \ldots, 0), \ldots, (0, \ldots, 0, d),$$ and $f$
is nondegenerate with respect to $\Delta_\infty(f)$.
\end{proposition}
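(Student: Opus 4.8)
The plan is to compute $\Delta_\infty(f)$ explicitly and then read off all three assertions, feeding in the defining property of Dwork-regularity one face at a time. Write $\mathbf e_i$ for the $i$-th standard basis vector of $\mathbb R^n$ and put $F:=f_d$. Since $\deg f=d$, every monomial $t_1^{a_1}\cdots t_n^{a_n}$ occurring in $f$ satisfies $a_1+\cdots+a_n\le d$, so $\Delta_\infty(f)$ is contained in the standard simplex $\Delta:=\{w\in\mathbb R_{\ge 0}^n:\ w_1+\cdots+w_n\le d\}=\mathrm{conv}(0,d\mathbf e_1,\ldots,d\mathbf e_n)$. For the reverse inclusion, I apply Dwork-regularity of $F$ to the singleton $S=\{i\}$: it forces $F_{\{i\}}$, the form obtained from $F$ by setting $t_j=0$ for $j\ne i$, to be a nonzero Deligne polynomial in $t_i$ alone, hence $F_{\{i\}}=c_i t_i^{\,d}$ with $c_i\in k^\times$. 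So $t_i^{\,d}$ occurs in $f$, whence $d\mathbf e_i\in\Delta_\infty(f)$; since $0\in\Delta_\infty(f)$ always, $\Delta\subseteq\Delta_\infty(f)$, and therefore $\Delta_\infty(f)=\Delta$, which --- the $n+1$ points $0,d\mathbf e_1,\ldots,d\mathbf e_n$ being affinely independent --- is exactly the simplex with the stated vertices.

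Granting this, the commode condition (here $r=0$) is pure combinatorics of a standard simplex: for $S\subseteq\{1,\ldots,n\}$ the slice $\Delta\cap\{w\in\mathbb R^n:\ w_j=0\ \text{for all}\ j\in S\}$ equals $\mathrm{conv}(0,\{d\mathbf e_i:\ i\notin S\})$, a simplex of dimension $n-\#S$. For nondegeneracy, note first that the faces of $\Delta$ not containing the origin are exactly $\tau_S:=\mathrm{conv}(\{d\mathbf e_i:\ i\in S\})$ for nonempty $S\subseteq\{1,\ldots,n\}$. The key point is that the lattice points of $\Delta$ lying on $\tau_S$ are precisely the exponent vectors of the degree-$d$ monomials of $f$ in the variables $\{t_i:\ i\in S\}$, so $f_{\tau_S}=F_S$ (the restriction of $F$ obtained by setting $t_j=0$ for $j\notin S$). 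By hypothesis $F_S$ is a nonzero homogeneous Deligne polynomial of degree $d$ in the $t_i$ ($i\in S$), so $F_S=0$ is a smooth hypersurface in $\mathbb P^{\#S-1}$; since $p\nmid d$, Euler's relation $dF_S=\sum_{i\in S}t_i\,\partial F_S/\partial t_i$ shows the common zero locus of $\{\partial F_S/\partial t_i:\ i\in S\}$ in affine space is $\{0\}$, hence it meets no point of $\mathbb G_m^{\#S}$. As $\partial f_{\tau_S}/\partial t_j=\partial F_S/\partial t_j$ vanishes identically for $j\notin S$, the system $\partial f_{\tau_S}/\partial t_1=\cdots=\partial f_{\tau_S}/\partial t_n=0$ has no solution in $\mathbb G_m^n$, which is the nondegeneracy condition.

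The only step that is more than bookkeeping is the identification $f_{\tau_S}=F_S$: one must observe that a monomial of $f$ of degree $<d$ cannot have its exponent vector on a face of $\Delta$ avoiding the origin --- such a face lies in the hyperplane $w_1+\cdots+w_n=d$ --- and then match the resulting restricted leading form with exactly the object that the definition of Dwork-regularity controls. Everything else reduces to the geometry of the standard simplex and to Euler's identity, both available because $(d,p)=1$.
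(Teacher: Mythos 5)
Your proof is correct and follows essentially the same route as the paper: use Dwork-regularity (via the restrictions $F_S$, in particular $F_{\{i\}}\neq 0$) to see that each $t_i^d$ occurs in $f$, so $\Delta_\infty(f)$ is the standard simplex of size $d$, from which commodeness is immediate, and then identify $f_\tau=F_S$ on each face $\tau$ avoiding the origin and invoke smoothness of $F_S=0$ together with Euler's relation for nondegeneracy. The paper leaves the nondegeneracy step as a one-line assertion; your write-up simply supplies the details it omits.
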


\begin{proof} Suppose $f_d$ is Dwork regular. Then for each $j\in \{1, \ldots, n\}$, the coefficient of 
$t_j^d$ in $f$ is nonzero. Otherwise, let $P_j$ be the point in $\mathbb P^{n-1}$ whose only nonzero homogenous coordinate is the $j$-th.  
Then $P_j$ is a point in $$f_d=t_1\frac{\partial f_d}{\partial t_1}=\cdots=t_n\frac{\partial f_d}{\partial t_n}=0.$$ So $f$ is commode and its Newton polytope 
$\Delta_\infty(f)$ at $\infty$ is the simplex with vertices 
$$(0, \ldots, 0), (d, 0, \ldots, 0), (0, d, 0,\ldots, 0), \ldots, (0, ,\ldots, 0,d).$$ 
From the condition that $f_d$ is Dwork-regular, one deduces that is  $f$ is nondegenerate with respect to $\Delta_\infty(f)$. 
\end{proof}

A polynomial $f$ of degree $d$ is called \emph{affine-Dwork-regular with respect to the coordinates} $t_1,\ldots, t_n$ 
if the homogenization 
$$F(t_0, t_1, \ldots t_n)=t_0^d f\Big(\frac{t_1}{t_0},\ldots, \frac{t_n}{t_0}\Big)$$ 
is Dwork-regular with respect to $t_0, t_1, \ldots, t_n$. This condition implies that the degree $d$ part $f_d$ of $f$ 
is Dwork-regular since $f_d=F(0, t_1, \ldots, t_n)$. In \cite[Lemma 3.2]{K}, Katz shows that for any Deligne polynomial $f$ 
so that its leading form $f_d$ is Dwork-regular with respect to $t_1, \ldots, t_n$, the polynomial $f(x)+a$ is affine-Dwork-regular
for all but finitely many $a\in \overline k$. 

Suppose $f(t_1, \ldots, t_n)$ is a polynomial of degree $d$ prime to $p$ and $g(t_1, \ldots, t_n)$ a polynomial of degree $<d$. Let 
$A$ and $B$ be positive integers and let $P_B(s)$ be a one-variable polynomial of degree $\leq B$. Let 
$G: \mathbb G_m\times\mathbb A^n\to \mathbb A^1$ be the morphism 
$$(t_0, t_1, \ldots, t_n)\mapsto G(t_0,\ldots, t_n)=t_0^A f(t_1, \ldots, t_n) +g(t_1, \ldots, t_n)+P_B(1/t_0).$$
Choose a prime number $\ell$ distinct from $p$ and fix a nontrivial additive character $$\psi: (k,+)\to\overline{\mathbb Q}_\ell^*.$$ Denote by $\mathcal L_\psi$
the Artin-Schreier sheaf on $\mathbb A^1$ corresponding to $\psi$. Let $$\chi:(k^*,\times) \to\overline{\mathbb Q}_\ell$$ be a multiplicative character, let 
$\mathcal K_\chi$ be the associated Kummer sheaf on $\mathbb G_m$, and let $$\pi_1: \mathbb G_m\times\mathbb A^n\to \mathbb G_m$$ be the projection.

\begin{theorem}\label{dl} Suppose that $f$ is an affine-Dwork-regular polynomial of degree $d$ prime to $p$, 
that $p$ is prime to $AB$,  that $P_B$ is of degree $B$, and $\mathrm{deg}(g)<\frac{Bd}{A+B}$. 

(i) We have $H^i_c(\mathbb G_{m, \bar k}\times \mathbb A_{\bar k}^n, \pi_1^*\mathcal K_\chi\otimes G^*\mathcal L_\psi)=0$ for $i\not =n+1$, 
and $$\mathrm{dim}\, H^{n+1}_c(\mathbb G_{m, \bar k}\times \mathbb A_{\bar k}^n,  \pi_1^*\mathcal K_\chi\otimes G^*\mathcal L_\psi)=(A+B)(d-1)^n.$$ 

(ii)  $H^{n+1}_c(\mathbb G_{m, \bar k}\times \mathbb A_{\bar k}^n, \pi_1^*\mathcal K_\chi\otimes G^*\mathcal L_\psi)$ is pure of weight $n+1$. 

(iii) We have $$\Big \vert \sum_{t_0\in k^*}\sum_{t_1,\ldots, t_n\in k} \chi(t_0)\psi\Big(t_0^A f(t_1, \ldots, t_n) +g(t_1, \ldots, t_n)+P_B(1/t_0)\Big)\Big\vert
\leq (A+B)(d-1)^n q^{(n+1)/2},$$ where $\vert\cdot \vert$ is the composite of an arbitrary isomorphism $\overline{\mathbb Q}_\ell
\stackrel\cong\to \mathbb C$ and the absolute value on $\mathbb C$. 
\end{theorem}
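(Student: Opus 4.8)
The plan is to regard $G$ as a Laurent polynomial in $t_0$ and an ordinary polynomial in $t_1,\ldots,t_n$, to show that it is nondegenerate with respect to its Newton polytope at infinity $\Delta_\infty(G)\subset\mathbb R^{n+1}$ and commode with respect to the coordinates $t_1,\ldots,t_n$, and then to invoke the theorem of Denef-Loeser in the form that allows some of the variables to range over $\mathbb A^1$ rather than $\mathbb G_m$ (the commode case) and allows twisting by the tame Kummer sheaf $\pi_1^*\mathcal K_\chi$ on the $t_0$-line. That theorem yields simultaneously the cohomological vanishing, the purity, and the value of the top Betti number, so (i) and (ii) follow; part (iii) is then a formal consequence of the Grothendieck-Lefschetz trace formula together with Deligne's bound for the Frobenius eigenvalues on a pure sheaf.

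First I would pin down $\Delta_\infty(G)$. Since $f$ is affine-Dwork-regular its constant term is nonzero, and by the Proposition $\Delta_\infty(f)$ is the standard simplex with vertices $0,de_1,\ldots,de_n$; in particular $f$ contains each $t_j^d$ and a nonzero constant. Recording the exponent of $t_0$ in the first coordinate of $\mathbb R^{n+1}$, the monomials of $t_0^Af$ then contribute the exponents $(A,\mathbf 0)$ and $(A,de_1),\ldots,(A,de_n)$, the top monomial of $P_B$ (present because $\deg P_B=B$) contributes $(-B,\mathbf 0)$, and every other monomial of $G$ has its exponent in the convex hull of these points together with the origin -- the condition $\deg g<Bd/(A+B)$ being exactly what places the monomials of $g$ inside this hull, while the intermediate monomials of $f$ are there because $\Delta_\infty(f)$ is the standard simplex. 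Hence $\Delta_\infty(G)$ is the $(n+1)$-dimensional simplex $\Sigma$ with vertices $(-B,\mathbf 0),(A,\mathbf 0),(A,de_1),\ldots,(A,de_n)$, and the origin lies in the relative interior of the edge joining $(-B,\mathbf 0)$ to $(A,\mathbf 0)$. For each $J\subseteq\{1,\ldots,n\}$ the slice $\Sigma^J:=\Sigma\cap\{w_i=0:i\in\{1,\ldots,n\}\setminus J\}$, obtained by keeping the coordinates $t_0$ and $t_j$ $(j\in J)$, is again a simplex, of full dimension $\#J+1$, so $G$ is commode with respect to $t_1,\ldots,t_n$.

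The main obstacle is the nondegeneracy check. The faces $\tau$ of $\Sigma$ not containing the origin are precisely those whose vertex set omits $(-B,\mathbf 0)$ or omits $(A,\mathbf 0)$. If $(-B,\mathbf 0)$ is omitted, then $\tau=\{A\}\times\tau'$ for a face $\tau'$ of $\Delta_\infty(f)$ and $G_\tau=t_0^Af_{\tau'}$; on $\mathbb G_m^{n+1}$, where $t_0\neq 0$ and $p\nmid A$, the vanishing of all partials forces $f_{\tau'}$ and all its $t_i$-partials to vanish together on $\mathbb G_m^n$. For $\tau'=\Delta_\infty(f)$ this is excluded by the affine-Dwork-regularity of $f$ (dehomogenize the Dwork-regularity of the homogenization $F$ at $t_0=1$ and use Euler's identity together with $p\nmid d$); for a proper face $\tau'$ it is excluded because the relevant coordinate restriction of $f$ is again affine-Dwork-regular when $\tau'$ passes through $0$, and the relevant restriction of the leading form $f_d$ is again Dwork-regular when it does not -- here one uses that $F$ Dwork-regular implies $F_S$ Dwork-regular for every nonempty $S$. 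If instead $(A,\mathbf 0)$ is omitted but $(-B,\mathbf 0)$ is not, then $G_\tau=c_Bt_0^{-B}+t_0^Ah$ with $h$ a coordinate restriction of $f_d$, and the $t_j$-partials already force $\nabla h$ to vanish on a torus, which is ruled out by the Dwork-regularity of $f_d$. The two vertices $(-B,\mathbf 0)$ and $(A,\mathbf 0)$ give $G_\tau$ equal to a single monomial supported in the torus, whose derivative never vanishes because $c_B\neq 0$, $f(0)\neq 0$ and $p\nmid AB$.

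Granting that $G$ is nondegenerate and commode, Denef-Loeser's theorem gives $H^i_c(\mathbb G_{m,\bar k}\times\mathbb A^n_{\bar k},\pi_1^*\mathcal K_\chi\otimes G^*\mathcal L_\psi)=0$ for $i\neq n+1$, purity of weight $n+1$ in degree $n+1$, and, via the stratification of $\mathbb A^n$ by tori, $\dim H^{n+1}_c=\sum_{J\subseteq\{1,\ldots,n\}}(-1)^{n-\#J}(\#J+1)!\,\mathrm{Vol}(\Sigma^J)$. Each $\Sigma^J$ is the simplex with apex $(-B,\mathbf 0)$ over a base contained in $\{w_0=A\}$ of normalized $\#J$-volume $d^{\#J}$, so $(\#J+1)!\,\mathrm{Vol}(\Sigma^J)=(A+B)d^{\#J}$, and the alternating sum collapses to $(A+B)\sum_{k=0}^n\binom{n}{k}(-1)^{n-k}d^k=(A+B)(d-1)^n$; this proves (i) and (ii). For (iii), the Grothendieck-Lefschetz trace formula identifies $\sum_{t_0\in k^*}\sum_{t_1,\ldots,t_n\in k}\chi(t_0)\psi\big(t_0^Af(t)+g(t)+P_B(1/t_0)\big)$ with $(-1)^{n+1}\mathrm{Tr}(\mathrm{Frob}_q\mid H^{n+1}_c)$, and since every eigenvalue of $\mathrm{Frob}_q$ on a sheaf pure of weight $n+1$ has complex absolute value $q^{(n+1)/2}$, the sum is bounded in absolute value by $(\dim H^{n+1}_c)\,q^{(n+1)/2}=(A+B)(d-1)^n\,q^{(n+1)/2}$.
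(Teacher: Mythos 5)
Your treatment of the untwisted case ($\chi=1$) is essentially the paper's argument: you identify the Newton polytope at infinity as the simplex with vertices $(-B,\mathbf 0),(A,\mathbf 0),(A,de_1),\ldots,(A,de_n)$, check commodity and nondegeneracy (your case analysis of the faces not containing the origin, using Dwork-regularity of $F$ and its coordinate restrictions together with Euler's identity and $p\nmid ABd$, is sound and in fact more detailed than the paper, which asserts nondegeneracy without proof), and the volume bookkeeping giving $(A+B)(d-1)^n$ is correct.

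The gap is the general $\chi$. You invoke ``the theorem of Denef--Loeser in the form that allows \dots twisting by the tame Kummer sheaf $\pi_1^*\mathcal K_\chi$,'' but no such form is in the cited reference: \cite[Theorem 9.2]{DL} concerns $G^*\mathcal L_\psi$ alone, and the twisted vanishing, purity and (especially) the exact Betti number $(A+B)(d-1)^n$ for a nontrivial $\chi$ do not follow from it without an additional argument. The paper supplies exactly this: with $m=q-1$ one has $\bigoplus_\chi\mathcal K_\chi\cong[t_0^m]_*\overline{\mathbb Q}_\ell$, so by proper base change and the projection formula the direct sum over all $\chi$ of the twisted cohomologies is the untwisted cohomology of $H(t_0,\ldots,t_n)=G(t_0^m,t_1,\ldots,t_n)$, which is an $(mA,mB)$-polynomial to which the $\chi=1$ case applies; this gives vanishing and purity for each summand and total dimension $m(A+B)(d-1)^n$, and then Illusie's theorem \cite[2.1]{I} (tame twists do not change the Euler characteristic) is needed to conclude that each character contributes exactly $(A+B)(d-1)^n$ rather than merely that the dimensions sum correctly. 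Without this step (or an equivalent one), part (i) for nontrivial $\chi$, and hence the constant in the twisted estimate (iii), is not established; note that vanishing and purity for each $\chi$ would already follow from the direct-sum statement, but the dimension count is the genuinely missing ingredient. Your remaining steps (trace formula plus Deligne's purity bound) are fine.
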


\begin{proof} We first treat the case where $\chi=1$ is trivial. 
Since $f$ is affine-Dwork-regular, $\mathrm{deg}\, P_B=B$, and $\mathrm{deg}(g)<\frac{Bd}{A+B}$, 
the Laurent polynomial $t_0^A f(t_1, \ldots, t_n) +g(t_1, \ldots, t_n)+P_B(1/t_0)$ is commode in $t_1, \ldots, t_n$, 
its Newton polytope at $\infty$ is the simplex $\Delta$ in $\mathbb R^{n+1}$ with vertices 
$$(-B, 0, \ldots, 0), (A, 0, \ldots, 0), (A, d, 0,\ldots, 0), (A, 0, d,0,\ldots, 0), \ldots, (A, 0, \ldots, 0, d),$$
and it is nondegenerate with respect respect to $\Delta$. 
Here because $\mathrm{deg}(g)<\frac{Bd}{A+B}$, the exponents of $g$ lies in the interior of $\Delta$. 
For any subset $S\subset\{1, \ldots, n\}$. We have 
\begin{eqnarray*}
&&\mathrm{vol}\Big(\Delta\cap\{(t_0, t_1,\ldots, t_n)\in\mathbb R^{n+1}: \,t_i=0\hbox{ for all } i\in S\}\Big)=\frac{1}{(n+1-\# S)!}d^{n-\#S} (A+B),\\
&&\sum_{S\subset \{1, \ldots, n\}} (-1)^{\# S} d^{n-\#S}(A+B)=(A+B)(d-1)^n.
\end{eqnarray*}
The assertions (i)-(ii) follows directly from \cite[Theorem 9.2]{DL}. 

Next we consider the general $\chi$ case. Let $m=q-1$, and let $[t_0^m]$ be the Galois \'etale covering  
$$[t_0^m]: \mathbb G_m\to \mathbb G_m,\quad t_0\mapsto t_0^m.$$ We have 
$$\bigoplus_{\chi:k^*\to\overline{\mathbb Q}_\ell}\mathcal K_\chi=[t_0^m]_*\overline{\mathbb Q}_\ell.$$
By the proper base change theorem, the projection formula and the Leray spectral sequence, we have 
\begin{eqnarray*}
&&\bigoplus_{\chi: k^*\to\overline{\mathbb Q}_\ell} H_c^{i}(\mathbb G_{m,\bar k}\times\mathbb A^n_{\bar k}, \pi_1^*\mathcal K_\chi\otimes G^*\mathcal L_\psi)\\
&\cong& H_c^{i}(\mathbb G_{m,\bar k}\times\mathbb A^n_{\bar k},  \pi_1^* [t_0^m]_*\overline{\mathbb Q}_\ell\otimes G^*\mathcal L_\psi)\\
&\cong& H_c^{i}(\mathbb G_{m,\bar k}\times\mathbb A^n_{\bar k}, ([t_0^m]\times\mathrm{id}_{\mathbb A^n})_* 
([t_0^m]\times\mathrm{id}_{\mathbb A^n})^* G^*\mathcal L_\psi)\\
&\cong& H_c^{i}(\mathbb G_{m,\bar k}\times\mathbb A^n_{\bar k}, ([t_0^m]\times\mathrm{id}_{\mathbb A^n})^* G^*\mathcal L_\psi)\\
&\cong& H_c^{i}(\mathbb G_{m,\bar k}\times\mathbb A^n_{\bar k}, H^*\mathcal L_\psi),
\end{eqnarray*}
where $H$ is the morphism
$$H:\mathbb G_m\times\mathbb A^n\to \mathbb A^1, \quad (t_0, \ldots, t_n)\mapsto H(t_0, \ldots, t_n)=G(t_0^m, t_1, \ldots, t_n).$$
Applying the $\chi=1$ case to the polynomial $H(t_0, \ldots, t_n)$ (and with $A$ and $B$ replaced by $mA$ and $mB$, respectively), we see 
that $H_c^{i}(\mathbb G_{m,\bar k}\times\mathbb A^n_{\bar k}, H^*\mathcal L_\psi)=0$ for $i\not=n+1$, 
$$\mathrm{dim}\, H_c^{n+1}(\mathbb G_{m,\bar k}\times\mathbb A^n_{\bar k}, H^*\mathcal L_\psi)=m(A+B)(d-1)^n,$$
and 
$H_c^{n+1}(\mathbb G_{m,\bar k}\times\mathbb A^n_{\bar k},H^*\mathcal L_\psi)$ is pure of weight $n+1$. 
Since $\mathcal K_\chi$ is tamely ramified, we have 
$$\chi_c(\mathbb G_{m,\bar k}\times\mathbb A^n_{\bar k},  \pi_1^*\mathcal K_\chi\otimes G^*\mathcal L_\psi)
=\chi_c(\mathbb G_{m,\bar k}\times\mathbb A^n_{\bar k},  G^*\mathcal L_\psi)$$ for all $\chi$ by \cite[2.1]{I}. 
The assertions (i) and (ii) follow immediately. 
The assertion (iii) follows then from the Grothendieck trace formula
\begin{eqnarray*}
&&\sum_{t_0\in k^*}\sum_{t_1,\ldots, t_n\in k} \chi(t_0)\psi\Big(t_0^A f(t_1, \ldots, t_n) +g(t_1, \ldots, t_n)+P_B(1/t_0)\Big)\\
&=&\sum_i (-1)^i
\mathrm{Tr}\Big(\mathrm{Fr}, H^i_c(\mathbb A_{\bar k}^n \times \mathbb G_{m, \bar k},  \pi_1^*\mathcal K_\chi\otimes G^*\mathcal L_\psi)\Big).
\end{eqnarray*}
\end{proof}

The following proposition is due to Deligne. 

\begin{proposition}\label{deligne} Notation as above. Suppose $f(t_1, \ldots, t_n)$ is a Deligne polynomial of degree $d$
and suppose $g(t_1, \ldots, t_n)$ is of degree $<d$. Let $\pi_1:\mathbb G_m\times \mathbb A^n\to \mathbb G_m$ be the projection.
Then $R^i\pi_{1!} G^* \mathcal L_\psi$ vanishes for $i\not=n$, and the sheaf
$\mathcal F=R^n\pi_{1!} G^* \mathcal L_\psi$ is a lisse sheaf on $\mathbb G_m$ with rank $(d-1)^n$ and pure of weight $n$. We have
$$H_c^i(\mathbb G_{m,\bar k}, \mathcal F\otimes 
\mathcal K_\chi)\cong H_c^{i+n}(\mathbb G_{m, \bar k}\times\mathbb A^n_{\bar k}, \pi_1^*\mathcal K_\chi\otimes G^*\mathcal L_\psi).$$
\end{proposition}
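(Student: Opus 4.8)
The plan is to compute $R^i\pi_{1!}G^*\mathcal L_\psi$ fibrewise over $\mathbb G_m$ and then control the family uniformly in $t_0$. First, since $P_B(1/t_0)$ depends only on $t_0$, we have $G^*\mathcal L_\psi\cong \widetilde G{}^*\mathcal L_\psi\otimes\pi_1^*\mathcal M$, where $\widetilde G(t_0,\ldots,t_n)=t_0^Af(t_1,\ldots,t_n)+g(t_1,\ldots,t_n)$ and $\mathcal M=(P_B(1/t_0))^*\mathcal L_\psi$ is a rank-one sheaf on $\mathbb G_m$; by the projection formula it suffices to treat $\widetilde G$. The projection $\pi_1$ is a product projection, in particular smooth of relative dimension $n$, and $R^i\pi_{1!}$ commutes with arbitrary base change, so for a geometric point $\bar t_0$ of $\mathbb G_m$ the stalk $(R^i\pi_{1!}\widetilde G{}^*\mathcal L_\psi)_{\bar t_0}$ is $H^i_c(\mathbb A^n_{\bar t_0},h_{t_0}^*\mathcal L_\psi)$, where $h_{t_0}(t)=t_0^Af(t)+g(t)$.

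Next I observe that each $h_{t_0}$ is a Deligne polynomial of degree $d$: since $\deg(g)<d$ its homogeneous degree-$d$ part is $t_0^Af_d$, which, $t_0$ being a unit, is a nonzero scalar multiple of $f_d$ and so defines the same smooth hypersurface in $\mathbb P^{n-1}$, and $d$ is prime to $p$ by hypothesis. Deligne's fundamental theorem on Deligne polynomials (\cite[3.7.2]{D}) then gives $H^i_c(\mathbb A^n_{\bar t_0},h_{t_0}^*\mathcal L_\psi)=0$ for $i\neq n$, $\dim H^n_c=(d-1)^n$, and $H^n_c$ pure of weight $n$. Combined with base change this shows $R^i\pi_{1!}\widetilde G{}^*\mathcal L_\psi=0$ for $i\neq n$, that $\mathcal F:=R^n\pi_{1!}G^*\mathcal L_\psi\cong\mathcal M\otimes R^n\pi_{1!}\widetilde G{}^*\mathcal L_\psi$ has all geometric stalks of dimension $(d-1)^n$, and, base change being Frobenius-equivariant, that $\mathcal F$ is punctually pure of weight $n$.

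The remaining, and genuinely substantive, point is the lisseness of $\mathcal F$. The plan is to relative-compactify: let $\bar\pi_1:\mathbb G_m\times\mathbb P^n\to\mathbb G_m$ and $j:\mathbb G_m\times\mathbb A^n\hookrightarrow\mathbb G_m\times\mathbb P^n$, so that $R\pi_{1!}G^*\mathcal L_\psi\cong R\bar\pi_{1*}(j_!G^*\mathcal L_\psi)$ with $\bar\pi_1$ proper and smooth. It then suffices to show that $\bar\pi_1$ is universally locally acyclic relative to $j_!G^*\mathcal L_\psi$, for then $R^i\bar\pi_{1*}(j_!G^*\mathcal L_\psi)$ is lisse on $\mathbb G_m$, whence $\mathcal F$ is lisse, of rank $(d-1)^n$ by the stalk count and pure of weight $n$ since it is lisse and punctually pure. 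This local acyclicity is exactly the geometric content of $f$ being a Deligne polynomial: $\pi_1$ itself is smooth, hence universally locally acyclic relative to any complex; and along the divisor at infinity $\mathbb G_m\times\mathbb P^{n-1}$ the pair $(\bar\pi_1,j_!G^*\mathcal L_\psi)$ is, up to the harmless scaling by $t_0^A$, independent of $t_0$, with the locus where $G^*\mathcal L_\psi$ fails to extend tamely being $\mathbb G_m\times\{f_d=0\}$, smooth over $\mathbb G_m$ because $f_d$ is smooth, so that no vanishing cycles arise over $\mathbb G_m$. I expect this acyclicity-at-infinity step to be the main obstacle, but it is precisely Deligne's theorem (whence the attribution), which we may cite from \cite{D}.

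Finally, the displayed isomorphism is formal given assertion (i). By the projection formula $R^q\pi_{1!}(\pi_1^*\mathcal K_\chi\otimes G^*\mathcal L_\psi)\cong\mathcal K_\chi\otimes R^q\pi_{1!}G^*\mathcal L_\psi$, which vanishes for $q\neq n$ and equals $\mathcal K_\chi\otimes\mathcal F$ for $q=n$; hence the Leray spectral sequence for $\pi_1$ converging to $H^{*}_c(\mathbb G_{m,\bar k}\times\mathbb A^n_{\bar k},\pi_1^*\mathcal K_\chi\otimes G^*\mathcal L_\psi)$ degenerates and yields $H^i_c(\mathbb G_{m,\bar k},\mathcal F\otimes\mathcal K_\chi)\cong H^{i+n}_c(\mathbb G_{m,\bar k}\times\mathbb A^n_{\bar k},\pi_1^*\mathcal K_\chi\otimes G^*\mathcal L_\psi)$, as claimed.
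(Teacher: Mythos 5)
Your proposal is correct and follows essentially the same route as the paper: the substantive sheaf-level assertions (vanishing outside degree $n$, lisseness, rank $(d-1)^n$, purity of weight $n$) are in the end reduced to Deligne's relative theorem in \cite{D} (the paper cites 3.7.3 and 3.7.2.3 directly, while you re-derive the fibrewise part from 3.7.2 and sketch the local-acyclicity-at-infinity input before citing Deligne for it), and the displayed isomorphism is obtained exactly as in the paper via the projection formula and the degenerate Leray spectral sequence for $\pi_1$.
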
 

\begin{proof} The first two assertions follow directly form  \cite[3.7.3 and 3.7.2.3]{D}. Note that since we assume $\mathrm{deg}(g)<d$, 
for each parameter $t_0$, 
$t_0^A f(t_1, \ldots, t_n) +g(t_1, \ldots, t_n)+P_B(1/t_0)$ is a Deligne polynomials in the variables $t_1, \ldots, t_n$. 
By the projection formula, we have 
\begin{eqnarray*}
R\Gamma_c(\mathbb G_{m,\bar k}\times\mathbb A^n_{\bar k},
\pi_1^*\mathcal K_\chi\otimes G^*\mathcal L_\psi)
&\cong& R\Gamma_c\Big(\mathbb G_{m,\bar k}, 
R\pi_{1!} (\pi_1^*\mathcal K_\chi\otimes G^*\mathcal L_\psi)\Big)\\
&\cong& R\Gamma_c(\mathbb G_{m,\bar k},  \mathcal K_\chi\otimes
R\pi_{1!} G^*\mathcal L_\psi).
\end{eqnarray*}
So we have a spectral sequence 
$$ H^i_c(\mathbb G_{m,\bar k}, \mathcal K_\chi\otimes R^j\pi_{1!} G^*\mathcal L_\psi)\Rightarrow H_c^{i+j}(\mathbb G_{m,\bar k}\times\mathbb A^n_{\bar k},
\pi_1^*\mathcal K_\chi\otimes G^*\mathcal L_\psi),$$ and it degenerates by the above results of Deligne. So we have
$$H_c^i(\mathbb G_{m,\bar k},\mathcal K_\chi\otimes \mathcal F)\cong H_c^{i+n}(\mathbb G_{m,\bar k}\times \mathbb A^n_{\bar k},
\pi_1^*\mathcal K_\chi\otimes G^*\mathcal L_\psi).$$
\end{proof}

\begin{remark} Deligne's result \cite[3.7.2.3]{D} also follows from \cite[Theorem 9.2]{DL}. To see this, we may replace $k$ by 
any finite extension, or replace the coordinates $t_1, \ldots, t_n$ by any linear change. 
So by \cite[Lemmas 3.1]{K}, we may assume the leading form $f_d$ of $f$ is Dwork-regular. 
Then $f$ is commode, the Newton polytope $\Delta_\infty(f)$ of $f$ at $\infty$ is the simplex with vertices
$$(0, \ldots, 0), (d, 0,\ldots,0), \ldots, (0, \ldots, 0,d),$$ and $f$ is
nondegenerate with
respect to $\Delta_\infty(f)$. For any subset $S\subset\{1, \ldots, n\}$, we have 
\begin{eqnarray*}
&& \mathrm{vol}\Big(\Delta_\infty(f)\cap\{(t_1,\ldots, t_n)\in\mathbb R^{n}: \,t_i=0\hbox{ for all } i\in S\}\Big)
=\frac{1}{(n-\# S)!}d^{n-\#S},\\
&& \sum_{S\subset \{1, \ldots, n\}} (-1)^{\# S} d^{n-\#S}=(d-1)^n.
\end{eqnarray*}
So \cite[3.7.2.3]{D} follows from \cite[Theorem 9.2]{DL}.
\end{remark}

As a direct consequence of Theorem \ref{dl} and Proposition \ref{deligne}, we have the following. 

\begin{theorem} [\cite{K} Theorems 5.1 and 8.1]  Suppose that $f$ is an affine-Dwork-regular polynomial of degree $d$ prime to $p$, 
that $p$ is prime to $AB$,  that $P_B$ is of degree $B$, and $\mathrm{deg}(g)<\frac{Bd}{A+B}$. Let 
$\mathcal F=R^n\pi_{1!} G^* \mathcal L_\psi.$

(i) We have $H^i_c(\mathbb G_{m, \bar k}, \mathcal F\otimes \mathcal K_\chi)=0$ for $i\not =1$,
and $$\mathrm{dim}\, H^1_c(\mathbb G_{m, \bar k}, \mathcal F\otimes\mathcal K_\chi)=(A+B)(d-1)^n.$$ 

(ii)  $H^1_c(\mathbb G_{m,\bar k}, \mathcal F\otimes\mathcal K_\chi)$ is pure of weight $n+1$. 
\end{theorem}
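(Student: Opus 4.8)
The plan is to combine the two preceding results directly, since the final theorem is explicitly billed as "a direct consequence of Theorem \ref{dl} and Proposition \ref{deligne}." First I would invoke Proposition \ref{deligne}: under the hypothesis that $f$ is a Deligne polynomial of degree $d$ and $\deg(g)<d$ (both implied by the stronger hypotheses of the present theorem), the sheaf $\mathcal F = R^n\pi_{1!}G^*\mathcal L_\psi$ is lisse on $\mathbb G_m$ of rank $(d-1)^n$, pure of weight $n$, the other higher direct images vanish, and there is a canonical isomorphism
$$H^i_c(\mathbb G_{m,\bar k},\mathcal F\otimes\mathcal K_\chi)\;\cong\;H^{i+n}_c(\mathbb G_{m,\bar k}\times\mathbb A^n_{\bar k},\pi_1^*\mathcal K_\chi\otimes G^*\mathcal L_\psi)$$
for every $i$ and every multiplicative character $\chi$.

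Next I would feed in Theorem \ref{dl}: under the present hypotheses (affine-Dwork-regular $f$, $p$ prime to $AB$, $\deg P_B = B$, $\deg g < Bd/(A+B)$), part (i) gives that the right-hand side $H^{i+n}_c(\mathbb G_{m,\bar k}\times\mathbb A^n_{\bar k},\pi_1^*\mathcal K_\chi\otimes G^*\mathcal L_\psi)$ vanishes unless $i+n=n+1$, i.e. unless $i=1$, and has dimension $(A+B)(d-1)^n$ when $i=1$; part (ii) gives that it is pure of weight $n+1$. Transporting these statements through the isomorphism of Proposition \ref{deligne} yields exactly assertions (i) and (ii) of the theorem: $H^i_c(\mathbb G_{m,\bar k},\mathcal F\otimes\mathcal K_\chi)=0$ for $i\neq 1$, $\dim H^1_c(\mathbb G_{m,\bar k},\mathcal F\otimes\mathcal K_\chi)=(A+B)(d-1)^n$, and $H^1_c(\mathbb G_{m,\bar k},\mathcal F\otimes\mathcal K_\chi)$ is pure of weight $n+1$.

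The only thing to double-check is hypothesis compatibility: Theorem \ref{dl} requires $f$ affine-Dwork-regular while Proposition \ref{deligne} only requires $f$ to be a Deligne polynomial, so the present theorem inherits the stronger hypothesis; likewise $\deg P_B = B$ and $\deg g < Bd/(A+B) \le d$ are consistent with Proposition \ref{deligne}'s requirement $\deg g < d$ (note $Bd/(A+B)<d$ since $A\ge 1$), and $P_B(1/t_0)$ plays no role in the statement of Proposition \ref{deligne} beyond entering the definition of $G$. So there is no genuine obstacle here; the entire content has already been established in Theorem \ref{dl}, and this theorem is a repackaging via the Leray/projection-formula isomorphism of Proposition \ref{deligne}. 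If one wishes, assertion (iii)-type information (the square-root bound) also follows, but it is not part of this statement; it is recorded separately as Theorem \ref{dl}(iii).

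\begin{proof}
By Proposition \ref{deligne}, applied to the Deligne polynomial $f$ (of degree $d$ prime to $p$) and to $g$ of degree $\deg(g)<Bd/(A+B)<d$, the complex $R\pi_{1!}G^*\mathcal L_\psi$ is concentrated in degree $n$, the sheaf $\mathcal F=R^n\pi_{1!}G^*\mathcal L_\psi$ is lisse of rank $(d-1)^n$ and pure of weight $n$ on $\mathbb G_m$, and for every $i$ there is an isomorphism
$$H^i_c(\mathbb G_{m,\bar k},\mathcal F\otimes\mathcal K_\chi)\;\cong\;H^{i+n}_c(\mathbb G_{m,\bar k}\times\mathbb A^n_{\bar k},\pi_1^*\mathcal K_\chi\otimes G^*\mathcal L_\psi).$$
Since $f$ is in fact affine-Dwork-regular, $p$ is prime to $AB$, $\deg P_B=B$, and $\deg(g)<Bd/(A+B)$, Theorem \ref{dl}(i) shows that the right-hand side vanishes for $i+n\neq n+1$ and has dimension $(A+B)(d-1)^n$ for $i+n=n+1$; that is, $H^i_c(\mathbb G_{m,\bar k},\mathcal F\otimes\mathcal K_\chi)=0$ for $i\neq 1$ and $\dim H^1_c(\mathbb G_{m,\bar k},\mathcal F\otimes\mathcal K_\chi)=(A+B)(d-1)^n$. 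This is (i). Similarly, Theorem \ref{dl}(ii) states that $H^{n+1}_c(\mathbb G_{m,\bar k}\times\mathbb A^n_{\bar k},\pi_1^*\mathcal K_\chi\otimes G^*\mathcal L_\psi)$ is pure of weight $n+1$, hence so is the isomorphic group $H^1_c(\mathbb G_{m,\bar k},\mathcal F\otimes\mathcal K_\chi)$, which is (ii).
\end{proof}
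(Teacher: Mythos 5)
Your proof is correct and is exactly the paper's intended argument: the paper states this theorem as "a direct consequence of Theorem \ref{dl} and Proposition \ref{deligne}," and your write-up simply makes explicit the transport of the vanishing, dimension, and purity statements through the isomorphism $H^i_c(\mathbb G_{m,\bar k},\mathcal F\otimes\mathcal K_\chi)\cong H^{i+n}_c(\mathbb G_{m,\bar k}\times\mathbb A^n_{\bar k},\pi_1^*\mathcal K_\chi\otimes G^*\mathcal L_\psi)$, together with the (correct) check that affine-Dwork-regularity implies $f$ is a Deligne polynomial and that $\deg(g)<Bd/(A+B)<d$.
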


\begin{remark} There is no need to assume $p$ is prime to $A+B$ in \cite[Theorems 5.1 and 8.1, Corollary 8.2]{K}, and no 
need to assume $p$ is odd in \cite[Theorems 1.1, 2.1, 3.5]{K}.  
\end{remark}

\section{Degenerate and mixed case}

\begin{lemma}\label{translate} Notation as above. For any $k$-point $a, b\in\mathbb A^1(k)$, let 
$[at^A]$ and $[bt^{-B}]$ be the morphisms 
\begin{eqnarray*}
\mathbb G_m\to \mathbb A^1, && t\mapsto a t^A,\\
\mathbb G_m\to \mathbb A^1, && t\mapsto b t^{-B},
\end{eqnarray*}
respectively, and let $$\mathcal F_\chi=\mathcal F\otimes\mathcal K_\chi=R^n\pi_{1!}G^*\mathcal L_\psi\otimes \mathcal K_\chi.$$ 
In the triangulated category $D_c^b(\mathbb G_m, \overline{\mathbb Q}_\ell)$ of complexes of $\overline{\mathbb Q}_\ell$-sheaves
on $\mathbb G_m$, we have 
\begin{eqnarray*}
\mathcal F_\chi\otimes [at^A]^*\mathcal L_\psi &\cong& R\pi_{1!} (\pi_1^*\mathcal K_\chi\otimes G_a^* \mathcal L_\psi) [n],\\
\mathcal F_\chi\otimes [bt^{-B}]^*\mathcal L_\psi &\cong& R\pi_{1!} (\pi_1^*\mathcal K_\chi\otimes G_b^* \mathcal L_\psi) [n],
\end{eqnarray*}
where
$G_a, G_b:\mathbb G_m\times \mathbb A^n\to \mathbb A^1$ are the morphisms 
\begin{eqnarray*}
(t_0, t_1, \ldots, t_n)&\mapsto& t_0^A \Big(f(t_1, \ldots, t_n) +a\Big)+g(t_1, \ldots, t_n)+P_B(1/t_0),\\
(t_0, t_1, \ldots, t_n)&\mapsto& t_0^A f(t_1, \ldots, t_n)+g(t_1, \ldots, t_n)+\Big(P_B(1/t_0)+ bt_0^{-B}\Big)
\end{eqnarray*}
\end{lemma}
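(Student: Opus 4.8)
The plan is to deduce both isomorphisms purely formally, from the additivity of the Artin--Schreier sheaf, the projection formula, and Proposition~\ref{deligne}. The starting point is the identity of morphisms $\mathbb G_m\times\mathbb A^n\to\mathbb A^1$
$$G_a=G+\pi_1^*[at^A],\qquad G_b=G+\pi_1^*[bt^{-B}],$$
which is immediate from the definitions: passing from $G$ to $G_a$ adds $at_0^A=\pi_1^*[at^A]$, and passing from $G$ to $G_b$ adds $bt_0^{-B}=\pi_1^*[bt^{-B}]$. I will carry out the case of $G_a$; the case of $G_b$ is word-for-word the same with $[at^A]$ replaced by $[bt^{-B}]$.

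Next I would use that $\mathcal L_\psi$ is a character sheaf: writing $\mathrm{add}\colon\mathbb A^1\times\mathbb A^1\to\mathbb A^1$ for the addition morphism, one has $\mathrm{add}^*\mathcal L_\psi\cong\mathcal L_\psi\boxtimes\mathcal L_\psi$, so pulling back along $(u,v)$ gives $(u+v)^*\mathcal L_\psi\cong u^*\mathcal L_\psi\otimes v^*\mathcal L_\psi$ for any two morphisms $u,v$ into $\mathbb A^1$. Taking $u=G$, $v=\pi_1^*[at^A]$, and using that $\pi_1^*$ commutes with tensor products, this yields
$$\pi_1^*\mathcal K_\chi\otimes G_a^*\mathcal L_\psi\;\cong\;\pi_1^*\big(\mathcal K_\chi\otimes[at^A]^*\mathcal L_\psi\big)\otimes G^*\mathcal L_\psi.$$
All tensor products here are underived, since $\mathcal K_\chi$, $[at^A]^*\mathcal L_\psi$ and $G^*\mathcal L_\psi$ are lisse of rank one, hence flat.

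Finally I would apply $R\pi_{1!}$ together with the projection formula $R\pi_{1!}(\pi_1^*\mathcal M\otimes\mathcal N)\cong\mathcal M\otimes R\pi_{1!}\mathcal N$, with $\mathcal M=\mathcal K_\chi\otimes[at^A]^*\mathcal L_\psi$ and $\mathcal N=G^*\mathcal L_\psi$, obtaining
$$R\pi_{1!}(\pi_1^*\mathcal K_\chi\otimes G_a^*\mathcal L_\psi)\;\cong\;\big(\mathcal K_\chi\otimes[at^A]^*\mathcal L_\psi\big)\otimes R\pi_{1!}G^*\mathcal L_\psi.$$
By Proposition~\ref{deligne}, the complex $R\pi_{1!}G^*\mathcal L_\psi$ has a single nonzero cohomology sheaf, $\mathcal F$ in degree $n$, hence $R\pi_{1!}G^*\mathcal L_\psi\cong\mathcal F[-n]$ in $D^b_c(\mathbb G_m,\overline{\mathbb Q}_\ell)$; tensoring with the lisse sheaf $\mathcal K_\chi\otimes[at^A]^*\mathcal L_\psi$ preserves concentration in degree $n$, and since $\mathcal F_\chi=\mathcal F\otimes\mathcal K_\chi$ the right-hand side equals $\big(\mathcal F_\chi\otimes[at^A]^*\mathcal L_\psi\big)[-n]$. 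Shifting by $[n]$ gives exactly the first claimed isomorphism; the second follows identically.

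The argument is entirely formal, so there is no substantial obstacle; the only points that deserve a word are that one must work in $D^b_c$ in order to invoke the projection formula (which is why the lemma is phrased there rather than for sheaves), and that the degenerate values $a=0$ or $b=0$ still make sense, since then $[at^A]^*\mathcal L_\psi$ (resp.\ $[bt^{-B}]^*\mathcal L_\psi$) is the pullback of $\mathcal L_\psi$ along the constant map to the origin of $\mathbb A^1$, hence the constant sheaf $\overline{\mathbb Q}_\ell$, and the asserted isomorphism is consistent.
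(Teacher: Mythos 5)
Your argument is correct and is essentially the paper's own proof: both rest on the same three ingredients, namely Proposition~\ref{deligne} identifying $R\pi_{1!}G^*\mathcal L_\psi$ with $\mathcal F[-n]$, the projection formula for $\pi_1$, and the additivity isomorphism $G_a^*\mathcal L_\psi\cong G^*\mathcal L_\psi\otimes\pi_1^*[at^A]^*\mathcal L_\psi$ (cited in the paper from SGA~$4\frac{1}{2}$, Sommes trig.~1.7.1), with the chain of isomorphisms merely read in the opposite order.
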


\begin{proof} By Proposition \ref{deligne}, we have 
$$\mathcal F= R\pi_{1!} G^* \mathcal L_\psi [n].$$
By the projection formula, we have 
\begin{eqnarray*}
\mathcal F\otimes \mathcal K_\chi \otimes [at^A]^*\mathcal L_\psi&\cong& 
R\pi_{1!} G^* \mathcal L_\psi [n]\otimes\mathcal K_\chi\otimes[at^A]^* \mathcal L_\psi[n]\\
&\cong&R\pi_{1!}\Big (\pi_1^*\mathcal K_\chi\otimes G^* \mathcal L_\psi \otimes \pi_1^*[at^A]^*\mathcal L_\psi\Big)[n].
\end{eqnarray*}
We then use the fact that 
$$G^* \mathcal L_\psi \otimes \pi_1^*[at^A]^*\mathcal L_\psi\cong G_a^*\mathcal L_\psi$$ which follows from 
\cite[Sommes trig. 1.7.1] {SGA4.5}. Similarly we can prove the second isomorphism. 
\end{proof} 

\begin{lemma}\label{pure} Let $X$ be a geometrically connected smooth projective curve over $k$, $S$ a finite
closed subscheme of $X$,  $j: X-S\hookrightarrow X$ the canonical open immersion, and $\mathcal F$ a
pure lisse $\overline{\mathbb Q}_\ell$-sheaf on $X-S$ of weight $w$. Suppose $H_c^1((X-S)_{\bar k}, \mathcal F)$ is pure of weight $w+1$
and $S(\bar k)$ contains at least two points. Then we have $j_!\mathcal F\cong j_*\mathcal F$. 
\end{lemma}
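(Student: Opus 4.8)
The plan is to prove the apparently stronger statement that the cokernel $\mathcal Q$ of the adjunction morphism $j_!\mathcal F\to j_*\mathcal F$ vanishes. Recall that $\mathcal Q$ is a skyscraper sheaf supported on $S$ whose stalk at a geometric point $\bar s\in S(\bar k)$ is $(j_*\mathcal F)_{\bar s}\cong \mathcal F^{I_{\bar s}}$, the subspace of invariants of (the representation corresponding to) $\mathcal F$ under the image of the inertia group $I_{\bar s}$ in $\pi_1((X-S)_{\bar k})$. Write $U=X-S$. Since $S\neq\emptyset$, $U$ is a non-proper connected curve, so $H^0_c(U_{\bar k},\mathcal F)=0$ (a nonzero global section of a lisse sheaf on $U_{\bar k}$ generates a trivial rank-one constant subsheaf and is therefore nowhere vanishing, hence cannot have proper support). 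Also $H^i(X_{\bar k},\mathcal Q)=0$ for $i>0$ because $\mathcal Q$ is punctual. Applying $H^\bullet(X_{\bar k},-)$ to $0\to j_!\mathcal F\to j_*\mathcal F\to\mathcal Q\to 0$ and using $H^i(X_{\bar k},j_!\mathcal F)\cong H^i_c(U_{\bar k},\mathcal F)$ together with $H^0(X_{\bar k},j_*\mathcal F)\cong H^0(U_{\bar k},\mathcal F)$, the long exact sequence reduces to
$$0\to H^0(U_{\bar k},\mathcal F)\to H^0(S_{\bar k},\mathcal Q)\stackrel{\delta}{\longrightarrow} H^1_c(U_{\bar k},\mathcal F)\to H^1(X_{\bar k},j_*\mathcal F)\to 0.$$

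The next step is a weight argument forcing $\delta=0$. Since $\mathcal F$ is pure of weight $w$, its ordinary direct image $j_*\mathcal F$ along the open immersion $j$ of curves is mixed of weights $\le w$ by Deligne's theory of weights; equivalently, each $\mathcal F^{I_{\bar s}}$ carries Frobenius eigenvalues of weight $\le w$. Hence $H^0(S_{\bar k},\mathcal Q)=\bigoplus_{\bar s\in S(\bar k)}\mathcal F^{I_{\bar s}}$ has weights $\le w$, and so does its image $\mathrm{im}(\delta)\subseteq H^1_c(U_{\bar k},\mathcal F)$. On the other hand $\mathrm{im}(\delta)$ is a Frobenius-stable subspace of $H^1_c(U_{\bar k},\mathcal F)$, which by hypothesis is pure of weight $w+1$, so $\mathrm{im}(\delta)$ is pure of weight $w+1$. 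A vector space that is simultaneously of weights $\le w$ and pure of weight $w+1$ is zero, so $\delta=0$. Consequently the inclusion $H^0(U_{\bar k},\mathcal F)\to H^0(S_{\bar k},\mathcal Q)$ is an isomorphism; in particular $\dim H^0(S_{\bar k},\mathcal Q)=\dim H^0(U_{\bar k},\mathcal F)$.

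Finally I would use the hypothesis that $S(\bar k)$ has at least two elements. For every $\bar s\in S(\bar k)$ the image of $I_{\bar s}$ is a subgroup of $\pi_1(U_{\bar k})$, so $\mathcal F^{I_{\bar s}}\supseteq \mathcal F^{\pi_1(U_{\bar k})}=H^0(U_{\bar k},\mathcal F)$, whence $\dim\mathcal Q_{\bar s}\ge \dim H^0(U_{\bar k},\mathcal F)$ for each $\bar s$. Summing over the (at least two) points of $S(\bar k)$ gives $\dim H^0(S_{\bar k},\mathcal Q)\ge 2\dim H^0(U_{\bar k},\mathcal F)$, while the previous paragraph gives $\dim H^0(S_{\bar k},\mathcal Q)=\dim H^0(U_{\bar k},\mathcal F)$. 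Therefore $\dim H^0(U_{\bar k},\mathcal F)=0$, hence $H^0(S_{\bar k},\mathcal Q)=0$, so each stalk $\mathcal Q_{\bar s}$ vanishes, $\mathcal Q=0$, and $j_!\mathcal F\cong j_*\mathcal F$.

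The only input that is not pure bookkeeping is the weight bound ``$j_*\mathcal F$ has weights $\le w$'', i.e. that the local inertia invariants of a pure sheaf of weight $w$ on a curve carry Frobenius eigenvalues of weight at most $w$; I would cite Deligne's Weil II for this (the elementary half of the weight-monodromy estimate, which is unconditional on curves). The two supporting facts $H^\bullet(X_{\bar k},j_!\mathcal F)\cong H^\bullet_c(U_{\bar k},\mathcal F)$ and ``$\mathcal Q$ is punctual with stalks $\mathcal F^{I_{\bar s}}$'' are standard, and should just be recorded cleanly before running the localization sequence; everything else is the exact sequence and the observation that inertia invariants contain the geometric monodromy invariants.
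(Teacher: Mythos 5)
Your proof is correct and takes essentially the same route as the paper: the same short exact sequence $0\to j_!\mathcal F\to j_*\mathcal F\to \mathcal Q\to 0$, the same appeal to Weil II (weights of $(j_*\mathcal F)_{\bar s}$ are $\le w$) played against the purity of $H^1_c$ to kill the connecting map, and the same use of $\#S(\bar k)\ge 2$. Your final dimension count via $\mathcal F^{I_{\bar s}}\supseteq H^0((X-S)_{\bar k},\mathcal F)$ is just a rephrasing of the paper's observation that $H^0(X_{\bar k},j_*\mathcal F)$ injects into each stalk $(j_*\mathcal F)_{\bar s}$.
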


\begin{proof} From the short exact sequence 
$$0\to j_!\mathcal F\to j_*\mathcal F\to j_*\mathcal F/j_!\mathcal F\to 0,$$ we get the exact sequence 
$$\begin{array}{ccccccc}
0\to&H^0_c((X-S)_{\bar k}, \mathcal F)&\to H^0(X_{\bar k}, j_*\mathcal F)&\to& H^0(X_{\bar k},  j_*\mathcal F/j_!\mathcal F)&\to &H^1_c((X-S)_{\bar k}, \mathcal F).\\
&\parallel&&&\wr\!\!\parallel&&\\
&0&&&\bigoplus_{\bar x\in S(\bar k)}(j_*\mathcal F)_{\bar x}&&
\end{array}$$
By \cite[1.8.1.]{D}, the weights of $(j_*\mathcal F)_{\bar x}$ are $\leq w$. Since $H_c^1((X-S)_{\bar k}, \mathcal F)$
is pure of weight $w+1$, the last arrow vanishes. So we get an isomorphism 
$$H^0(X_{\bar k},j_* \mathcal F)\stackrel\cong \to \bigoplus_{\bar x\in S(\bar k)}(j_*\mathcal F)_{\bar x}.$$
Since $\mathcal F$ is a lisse sheaf, each  canonical homomorphism 
$$H^0(X_{\bar k}, j_*\mathcal F)\stackrel\cong \to (j_*\mathcal F)_{\bar x}$$ is injective. Thus each projection 
$$\bigoplus_{\bar x\in S(\bar k)}(j_*\mathcal F)_{\bar x}\to (j_*\mathcal F)_{\bar x}$$
is injective.  Since $S(\bar k)$ contains at least two points, we have $(j_*\mathcal F)_{\bar x}=0$ for all $x\in S(\bar k)$. 
So $j_!\mathcal F\cong j_*\mathcal F$. 
\end{proof}

The following corollary proves Theorems \ref{I} and \ref{II} by taking $\chi=1$. 

\begin{corollary}\label{K} [\cite{K} Corollaries 5.2 and 8.2]  Suppose $d$ is prime to $p$, $p$ is prime to $AB$, and 
$\mathrm{deg}(g)<\frac{Bd}{A+B}$. Suppose furthermore
one of the following conditions holds:

(a) $f$ is a Deligne polynomial of degree $d$, and $\mathrm{deg}(P_B)=B$. 

(b) $f$ is affine-Dwork-regular and $\mathrm{deg}\, P_B\leq B$. 

\noindent Let  $\mathcal F=R^n\pi_{1!} G^* \mathcal L_\psi$.

(i) We have 
$H^i_c(\mathbb G_{m, \bar k}, \mathcal F\otimes\mathcal K_\chi)=0$ for $i\not =1$, and 
$$\mathrm{dim}\,H^1_c(\mathbb G_{m, \bar k}, \mathcal F\otimes\mathcal K_\chi) 
\leq (A+B)(d-1)^n.$$ 

(ii) We have $H^i_c(\mathbb G_{m, \bar k}\times \mathbb A_{\bar k}^n, \pi_1^*\mathcal K_\chi\otimes G^*\mathcal L_\psi)=0$ for 
$i\not=n+1$ and $$\mathrm{dim}\, H^{n+1}_c(\mathbb G_{m, \bar k}\times \mathbb A_{\bar k}^n, \pi_1^*\mathcal K_\chi\otimes G^*\mathcal L_\psi)
\leq (A+B)(d-1)^n.$$

(iii) We have $$\Big \vert \sum_{t_0\in k^*}\sum_{t_1,\ldots, t_n\in k} \chi(t_0) \psi\big(t_0^A f(t_1, \ldots, t_n) +g(t_1, \ldots, t_n)+P_B(1/t_0)\big)\Big\vert
\leq (A+B)(d-1)^n q^{(n+1)/2}.$$ 
\end{corollary}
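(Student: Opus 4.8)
The plan is to reduce Corollary \ref{K} to the non-degenerate pure case already established in Theorem \ref{dl} by a specialization argument, using Lemma \ref{translate} to bring in a parameter $a$ (case (a)) or $b$ (case (b)), and Lemma \ref{pure} together with the semicontinuity of cohomology to transport the vanishing and the dimension bound from the special fiber back to the original one. First I would observe that, by the same Kummer-covering trick used in the proof of Theorem \ref{dl} (pushing forward along $[t_0^m]$ with $m=q-1$ and invoking $\chi_c$-invariance under tame twists via \cite[2.1]{I}), it suffices to treat $\chi=1$; and by Proposition \ref{deligne} assertions (i) and (ii) for $\mathcal F\otimes\mathcal K_\chi$ on $\mathbb G_m$ are equivalent to the corresponding assertions for $\pi_1^*\mathcal K_\chi\otimes G^*\mathcal L_\psi$ on $\mathbb G_m\times\mathbb A^n$, while (iii) follows from (ii) by the Grothendieck trace formula and Deligne's weight bound exactly as in Theorem \ref{dl}(iii). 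So the heart of the matter is to bound $\dim H^1_c(\mathbb G_{m,\bar k},\mathcal F)$ by $(A+B)(d-1)^n$ and to show $H^0_c=H^2_c=0$.

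For the vanishing: $H^0_c(\mathbb G_{m,\bar k},\mathcal F)=0$ automatically since $\mathcal F=R^n\pi_{1!}G^*\mathcal L_\psi$ is lisse on $\mathbb G_m$ (Proposition \ref{deligne}) and $\mathbb G_m$ is affine of dimension $1$, so $H^0_c$ of any lisse sheaf vanishes; and $H^2_c(\mathbb G_{m,\bar k},\mathcal F)=\mathcal F_{\bar\eta,\,\pi_1(\bar k)}$, the coinvariants of geometric monodromy, which one shows is $0$ by checking $\mathcal F$ has no nonzero geometric monodromy coinvariants — equivalently, by Poincaré duality, that $H^0$ of the dual vanishes — and this follows because $G^*\mathcal L_\psi$ restricted to a fiber is the Artin–Schreier sheaf attached to a Deligne polynomial, whose top compactly-supported cohomology (which is all of it) has no trivial constituent; more directly one invokes that in the non-degenerate case (Theorem \ref{dl}) $H^{n+1}_c$ is the only nonzero group, hence after specialization the generic $\mathcal F$ already has $H^2_c=0$, and $H^2_c$ can only drop under further specialization/degeneration by the Leray/excision exact sequences. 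Then for the dimension bound I would use the Euler–Poincaré formula on $\mathbb G_m$: since $H^0_c=H^2_c=0$ we get
$$\dim H^1_c(\mathbb G_{m,\bar k},\mathcal F)=-\chi_c(\mathbb G_{m,\bar k},\mathcal F),$$
and the Grothendieck–Ogg–Shafarevich formula expresses the right-hand side in terms of $\mathrm{rank}\,\mathcal F=(d-1)^n$ and the Swan conductors of $\mathcal F$ at $0$ and $\infty$ on $\mathbb P^1$. The key is that the local monodromy of $\mathcal F$ at $0$ and $\infty$ is controlled by Lemma \ref{translate}: twisting $\mathcal F$ by $[at^A]^*\mathcal L_\psi$ or $[bt^{-B}]^*\mathcal L_\psi$ does not change the rank and changes Swan conductors in a predictable way, and for generic $a$ (resp. $b$) Theorem \ref{dl} applies to $G_a$ (resp. $G_b$) — because $f(x)+a$ is affine-Dwork-regular for all but finitely many $a$ by \cite[Lemma 3.2]{K} in case (a), and $\deg(P_B)+bt_0^{-B}$ has exact degree $B$ for $b\neq0$ in case (b) — giving $\dim H^1_c$ of the twisted sheaf exactly $(A+B)(d-1)^n$, whence the Swan and drop numbers of $\mathcal F$ itself are bounded accordingly, yielding $\dim H^1_c(\mathbb G_{m,\bar k},\mathcal F)\leq (A+B)(d-1)^n$.

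Concretely I would run the specialization through Lemma \ref{pure}: on $X=\mathbb P^1$ with $S=\{0,\infty\}$ (at least two points), the twisted sheaf $\mathcal F\otimes[at^A]^*\mathcal L_\psi$ for generic $a$ is pure (Proposition \ref{deligne} gives purity of $\mathcal F$, Theorem \ref{dl} gives purity of its $H^1_c$), so $j_!\cong j_*$ for the twisted sheaf and hence $H^1_c(\mathbb G_m,\mathcal F\otimes[at^A]^*\mathcal L_\psi)=H^1(\mathbb P^1,j_{!*}(\cdots))$ has constant dimension over the locus of generic $a$; then letting $a\to 0$ (the original $G=G_0$), the specialization map and the fact that $H^0_c,H^2_c$ stay zero force $\dim H^1_c$ to be at most the generic value, i.e. $\leq (A+B)(d-1)^n$. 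The main obstacle I anticipate is the bookkeeping of local monodromy at $0$ and $\infty$ — precisely matching the Swan conductor contributions of the $t_0^A$-factor (wild of slope related to $A/(A+B)$ via the Deligne polynomial structure of the fibers) and the $P_B(1/t_0)$-factor (wild at $0$ of slope $\leq B$) against the count $(A+B)(d-1)^n$ — and ensuring the "generic value is the maximum" half of the semicontinuity, which needs the purity input from Theorem \ref{dl} exactly to rule out the generic $H^1_c$ being artificially small; everything else is formal manipulation of the Leray spectral sequence, the projection formula, and Grothendieck's trace formula.
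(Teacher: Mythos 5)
Your high-level strategy --- bring in the parameter $a$ (case (a)) or $b$ (case (b)) via Lemma \ref{translate}, use Theorem \ref{dl} at a generic parameter, and use Lemma \ref{pure} for the purity input --- is the same skeleton as the paper's proof, but the step that carries all the weight is missing. You repeatedly appeal to ``semicontinuity of cohomology'' to conclude that $\dim H^1_c(\mathbb G_{m,\bar k},\mathcal F_\chi)$ at the special parameter ($a=0$, resp.\ $b=0$) is at most the generic value $(A+B)(d-1)^n$. That is not a general fact: the fibrewise compactly supported cohomology of the family $a\mapsto R\Gamma_c(\mathbb G_{m,\bar k},\mathcal F_\chi\otimes[at^A]^*\mathcal L_\psi)$ is only a constructible function of $a$, and $h^1_c$ could a priori jump up at $a=0$ (the Euler characteristic of the fibres is certainly not constant there, since the Artin--Schreier twist degenerates). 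The paper's mechanism is precisely what your proposal lacks: one identifies $R\Gamma_c(\mathbb G_{m,\bar k},\mathcal F_\chi\otimes[at^A]^*\mathcal L_\psi)[2]$ with the stalk at $a$ of the Deligne--Fourier transform $\mathfrak F(j_![t^A]_!\mathcal F_\chi[1])$, invokes Laumon's theorem \cite[1.3.2.3]{L} that Fourier transform preserves perversity, and then uses that for a perverse sheaf $K$ on $\mathbb A^1$ the specialization maps $H^{-1}(K_{\bar s})\to H^{-1}(K_{\bar\eta})$ are injective; this is exactly what gives the bound at $a=0$ (and, with $[t^{-B}]$ in place of $[t^A]$, at $b=0$ in case (b)). Your alternative Grothendieck--Ogg--Shafarevich route has the same gap in different clothing: knowing $\mathrm{Swan}_0+\mathrm{Swan}_\infty$ of the twisted sheaf does not bound the Swan conductors of $\mathcal F_\chi$ itself, because tensoring with a rank-one sheaf of break $A$ at $\infty$ can \emph{lower} the break of any part of $\mathcal F_\chi$ whose break equals $A$; making that bookkeeping rigorous is essentially Katz's original local-monodromy argument, which the Fourier/perversity argument is designed to replace.

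A second, smaller problem is your treatment of $H^2_c(\mathbb G_{m,\bar k},\mathcal F_\chi)=0$. This vanishing concerns the coinvariants of the global monodromy of $\mathcal F_\chi$ on $\mathbb G_m$, so the fibrewise remark about Deligne polynomials having ``no trivial constituent'' is not relevant, and ``$H^2_c$ can only drop under specialization'' is not a valid principle (nor is there a specialization relating the twisted and untwisted sheaves: they are different local systems on the same $\mathbb G_m$). The paper deduces it from Theorem \ref{dl}(ii) via Lemma \ref{pure}: purity of $H^1_c$ of the twisted sheaf gives $\bar j_!\cong\bar j_*$ on $\mathbb P^1$ for $\mathcal F_\chi\otimes[at^A]^*\mathcal L_\psi$, and since $[at^A]^*\mathcal L_\psi$ is lisse at $0$ (resp.\ $[bt^{-B}]^*\mathcal L_\psi$ is lisse at $\infty$), the isomorphism descends to $j_!\mathcal F_\chi\cong j_*\mathcal F_\chi$ across the relevant puncture for the \emph{untwisted} sheaf, whence $H^0(\mathbb G_{m,\bar k},\mathcal F_\chi)=0$ and then $H^2_c=0$ by duality. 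Your reduction to $\chi=1$ by the Kummer covering, the equivalence of (i) and (ii) via Proposition \ref{deligne}, and the deduction of (iii) from (ii) are fine (note only that in case (b) the admissible $b$ are those not cancelling the possible degree-$B$ term of $P_B$, not merely $b\neq 0$), but without the perversity input the central dimension bound remains unproven.
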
 

\begin{proof} (i) We first work under the condition (a). Let $\mathrm{pr}_i: \mathbb A^1\times \mathbb A^1\to \mathbb A^1$ $(i=1,2)$
be the projections and let  
$$\langle \,,\,\rangle: \mathbb A^1\times \mathbb A^1\to \mathbb A^1,\quad (t, t')\mapsto tt'$$ be the canonical pairing
on $\mathbb A^1$. Recall that the Deligne-Fourier transform is the functor $$\mathfrak F: D_c^b(\mathbb A^1, \overline{\mathbb Q}_\ell)
\to D_c^b(\mathbb A^1, \overline{\mathbb Q}_\ell),\quad \mathfrak F(K)=R\mathrm{pr}_{2!}\Big(\mathrm{pr}_1^*K\otimes\langle\,,\,\rangle^*\mathcal L_\psi\Big)[1].$$
Let $[t^A]: \mathbb G_m\to \mathbb G_m$ be the finite \'etale morphism 
$t\mapsto t^A$ and let $j:\mathbb G_m \hookrightarrow \mathbb A^1$ be the canonical open immersion. 
Fix notation by the following diagram:
$$\begin{array}{ccccccc}
\mathbb G_m\times\mathbb A^1&\stackrel{[t^A]\times\mathrm{id}}\to&\mathbb G_m\times\mathbb A^1&
\stackrel{j\times \mathrm{id}}\hookrightarrow&
\mathbb A^1\times\mathbb A^1&\stackrel{\mathrm{pr}_2}\to&\mathbb A^1\\
{\scriptstyle q_1}\downarrow&&\downarrow&&{\scriptstyle\mathrm{pr}_1}\downarrow&&\downarrow\\
\mathbb G_m&\stackrel{[t^A]}\to&\mathbb G_m&\stackrel j\hookrightarrow&
\mathbb A^1&\to&\mathrm{Spec}\, k.
\end{array}$$
Again let $\mathcal F_\chi=\mathcal F\otimes\mathcal K_\chi$. By the proper base change theorem and the projection formula, we have
\begin{eqnarray*}
\mathfrak F(j_![t^A]_!\mathcal F_\chi[1]) &\cong& R\mathrm{pr}_{2!}\Big(\mathrm{pr}_1^* (j\circ [t^A])_! \mathcal F_\chi
\otimes \langle\, ,\,\rangle^* \mathcal L_\psi\Big)[2]\\
&\cong&R\mathrm{pr}_{2!}\Big(\big((j\circ [t^A])\times\mathrm{id}\big)_! q_1^* \mathcal F_\chi \otimes \langle\, ,\,\rangle^* \mathcal L_\psi\Big)[2]\\
&\cong& R\mathrm{pr}_{2!} \big((j\circ [t^A])\times\mathrm{id}\big)_!\Big( q_1^*\mathcal F_\chi\otimes \big((j\circ[t^A])\times\mathrm{id}\big)^* 
\langle\, ,\,\rangle^*\mathcal L_\psi\Big)[2]\\
&\cong& Rq_{2!}\Big(q_1^*\mathcal F_\chi\otimes[t^At']^*\mathcal L_\psi\Big)[2],
\end{eqnarray*}
where $q_2: \mathbb G_m\times\mathbb A^1\to \mathbb A^1$ is the projection, and 
$[t^At']$ is the morphism
$$\mathbb G_m\times\mathbb A^1\to \mathbb A^1, \quad (t, t')\mapsto t^A t'.$$ 
So for any geometric point $a\in \mathbb A^1(\bar k)$, we have 
\begin{eqnarray}\label{fourier}
\mathfrak F(j_![t^A]_!\mathcal F_\chi[1])_{\bar a}
\cong R\Gamma(\mathbb G_{m,\bar k} , \mathcal F_\chi\otimes [at^A]^*\mathcal L_\psi)[2]
\end{eqnarray}
Combined with Lemma \ref{translate}, we get
\begin{eqnarray}\label{fact1}
\begin{array}{rl}
\mathfrak F(j_![t^A]_!\mathcal F_\chi[1])_{\bar a}
\cong &R\Gamma\Big(\mathbb G_{m,\bar k}, R\pi_{1!} (\pi_1^*\mathcal K_\chi\otimes G_a^*\mathcal L_\psi)\Big)[n+2]\\
\cong &R\Gamma(\mathbb G_{m, \bar k}\times \mathbb A^n_{\bar k}, \pi_1^*\mathcal K_\chi\otimes G_a^*\mathcal L_\psi)[n+2].
\end{array}
\end{eqnarray}
Let $U$ be the Zariski open set of $\mathbb A^1$ so that for any $a\in U(\bar k)$, 
$f(t_1,\ldots, t_n)+a$ is affine-Dwork-regular. Then by Theorem \ref{dl} (i), for any $a\in U(\bar k)$, we have 
\begin{eqnarray*}
\mathfrak F(j_![t^A]_!\mathcal F_\chi[1])_{\bar a}&\cong& H_c^{n+1}( \mathbb G_{m, \bar k}\times\mathbb A_{\bar k}^n, 
\pi_1^*\mathcal K_\chi\otimes G_a^*\mathcal L_\psi)[1],\\
\mathrm{dim}\, \mathfrak F(j_![t^A]_!\mathcal F_\chi[1])_{\bar a}&=& -(A+B)(d-1)^n.
\end{eqnarray*}
The sheaf $j_![A]_!\mathcal F_\chi[1]$ is
a perverse sheaf. By \cite[1.3.2.3]{L},  $\mathfrak F(j_![t^A]_!\mathcal F_\chi[1])$ is perverse, and its generic rank 
is $-(A+B)(d-1)^n$. As a perverse sheaf, for any $s\in \mathbb A^1(\overline k)$, the specialization homomorphism
 $$H^{-1}(\mathfrak F(j_![t^A]_!\mathcal F_\chi[1])_{\bar s})\to H^{-1}(\mathfrak F(j_![t^A]_!\mathcal F_\chi[1])_{\bar\eta})$$
 is injective, where $\bar\eta$ is the geometric generic point of $\mathbb A^1$. It follows that 
 \begin{eqnarray}\label{inequality}
 \mathrm{dim}\, H^{-1}(\mathfrak F(j_![t^A]_!\mathcal F_\chi[1])_{\bar s})\leq (A+B) (d-1)^n.
 \end{eqnarray}
 By (\ref{fourier}), we have 
 $$H^{-1}(\mathfrak F(j_![t^A]_!\mathcal F_\chi[1])_{\bar s})\cong 
 H_c^1(\mathbb G_{m,\bar k} , \mathcal F_\chi\otimes [st^A]^*\mathcal L_\psi).$$
 Taking $s=0$, we have 
 $$H^{-1}(\mathfrak F(j_![t^A]_!\mathcal F_\chi[1])_{\bar 0})\cong H^1_c(\mathbb G_{m, \bar k}, \mathcal F_\chi).$$ So 
by (\ref{inequality}), we have
 $$\mathrm{dim}\,H^1_c(\mathbb G_{m, \bar k}, \mathcal F_\chi) 
\leq (A+B)(d-1)^n.$$
By (\ref{fourier}) and (\ref{fact1}), for any $a\in U(\bar k)$, we have
$$H^{-1}(\mathfrak F(j_![A]_!\mathcal F_\chi[1])_{\bar a})
\cong H_c^1(\mathbb G_{m, \bar k}, \mathcal F_\chi\otimes [at^A]^*\mathcal L_\psi)
\cong H_c^{n+1}(\mathbb G_{m, \bar k}\times \mathbb A_{\bar k}^n, \pi_1^*\mathcal K_\chi\otimes G_a^*\mathcal L_\psi).$$
So $H_c^1(\mathbb G_{m,\bar k} , \mathcal F_\chi\otimes [at^A]^*\mathcal L_\psi)$ is pure of weight $n+1$ by Theorem \ref{dl} (ii). 
Let $\bar j:\mathbb G_m\hookrightarrow
\mathbb P^1$ be the canonical open immersion. 
By Lemma \ref{pure}, we have 
$$\bar j_!(\mathcal F_\chi\otimes [at^A]^*\mathcal L_\psi)\cong\bar j_*(\mathcal F_\chi\otimes [at^A]^*\mathcal L_\psi).$$
Since $[at^A]^*\mathcal L_\psi$ is lisse on $\mathbb A^1$, this implies that 
$$j_!\mathcal F_\chi\cong j_*\mathcal F_\chi.$$ We have $H^0(\mathbb A^1_{\bar k}, j_!\mathcal F_\chi)=0$ since $\mathcal F_\chi$ is lisse on 
$\mathbb G_m$. 
So  $$H^0(\mathbb G_{m,\bar k}, \mathcal F_\chi)\cong 
H^0(\mathbb A^1_{\bar k}, j_*\mathcal F_\chi)\cong H^0(\mathbb A^1_{\bar k}, j_!\mathcal F_\chi)=0.$$ Then by the Poincar\'e duality, we have
$H^2_c(\mathbb G_{m,\bar k},\mathcal F_\chi)=0$. Finally $H^0_c(\mathbb G_{m,\bar k},\mathcal F_\chi)=0$ since $\mathbb G_m$ is affine. 

Next we work under the condition (b). For any geometric point $b\in \mathbb A^1(\bar k)$, we have 
\begin{eqnarray}\label{fact2}
\begin{array}{rl}
\mathfrak F(j_![t^{-B}]_!\mathcal F_\chi[1])_{\bar b}
\cong  &R\Gamma(\mathbb G_{m,\bar k} , \mathcal F_\chi\otimes [bt^{-B}]^*\mathcal L_\psi)[2]\\
\cong &R\Gamma(\mathbb G_{m, \bar k}\times \mathbb A^n_{\bar k},  \pi_1^*\mathcal K_\chi\otimes G_b^*\mathcal L_\psi)[n+2].
\end{array}
\end{eqnarray}
By Theorem \ref{dl}, for any $b\in \mathbb G_m(\bar k)$, we have 
\begin{eqnarray*}
\mathfrak F(j_![t^{-B}]_!\mathcal F_\chi[1])_{\bar b}&\cong& H_c^{n+1}(\mathbb G_{m, \bar k}\times \mathbb A_{\bar k}^n, 
\pi_1^*\mathcal K_\chi\otimes G_b^*\mathcal L_\psi)[1],\\
\mathrm{dim}\, \mathfrak F(j_![t^{-B}]_!\mathcal F_\chi[1])_{\bar b}&=& -(A+B)(d-1)^n.
\end{eqnarray*}
The sheaf $\mathfrak F(j_![t^{-B}]_!\mathcal F_\chi[1])$ is perverse, and its generic rank 
is $-(A+B)(d-1)^n$. The specialization homomorphism
 $$H^{-1}(\mathfrak F(j_![t^{-B}]_!\mathcal F_\chi[1])_{\bar 0})\to H^{-1}(\mathfrak F(j_![t^{-B}]_!\mathcal F_\chi[1])_{\bar\eta})$$
 is injective. It follows that 
 \begin{eqnarray}
 \mathrm{dim}\, H^{-1}(\mathfrak F(j_![t^{-B}]_!\mathcal F_\chi[1])_{\bar 0})\leq (A+B) (d-1)^n.
 \end{eqnarray}
 We have 
 $$H^{-1}(\mathfrak F(j_![t^{-B}]_!\mathcal F_\chi[1]))_{\bar 0}\cong H^1_c(\mathbb G_{m, \bar k}, \mathcal F_\chi).$$ So 
 we have
 $$\mathrm{dim}\,H^1_c(\mathbb G_{m, \bar k}, \mathcal F_\chi) 
\leq (A+B)(d-1)^n.$$
By (\ref{fact2}), for any $b\in \mathbb G_m(\bar k)$, we have
$$H^{-1}(\mathfrak F(j_![t^{-B}]_!\mathcal F_\chi[1])_{\bar b})
\cong H_c^1(\mathbb G_{m, \bar k}, \mathcal F_\chi\otimes [bt^{-B}]^*\mathcal L_\psi)
\cong H_c^{n+1}(\mathbb G_{m, \bar k}\times \mathbb A_{\bar k}^n, \pi_1^*\mathcal K_\chi\otimes G_b^*\mathcal L_\psi)[2].$$
So $H_c^1(\mathbb G_{m,\bar k} , \mathcal F_\chi\otimes [bt^{-B}]^*\mathcal L_\psi)$ is pure of weight $n+1$ by Theorem \ref{dl} (ii). 
Let $\bar j:\mathbb G_m\hookrightarrow
\mathbb P^1$ and $j':\mathbb G_m\hookrightarrow \mathbb P^1-\{0\}$ be the canonical open immersions. 
By Lemma \ref{pure}, we have 
$$\bar j_!(\mathcal F_\chi\otimes [bt^{-B}]^*\mathcal L_\psi)\cong\bar j_*(\mathcal F_\chi\otimes [bt^{-B}]^*\mathcal L_\psi).$$
Since $[bt^{-B}]^*\mathcal L_\psi$ is lisse on $\mathbb P^1-\{0\}$, this implies that 
$$j'_!\mathcal F_\chi\cong j'_*\mathcal F_\chi.$$ We have $H^0(\mathbb P^1_{\bar k}-\{0\}, j'_!\mathcal F_\chi)=0$ since $\mathcal F_\chi$ is lisse on 
$\mathbb G_m$. 
So  $$H^0(\mathbb G_{m,\bar k}, \mathcal F_\chi)\cong 
H^0(\mathbb P^1_{\bar k}-\{0\}, j'_*\mathcal F_\chi)\cong H^0(\mathbb P^1_{\bar k}-\{0\}, j'_!\mathcal F_\chi)=0.$$ Then by the Poincar\'e duality, we have
$H^2_c(\mathbb G_{m,\bar k},\mathcal F_\chi)=0$. Finally $H^0_c(\mathbb G_{m,\bar k},\mathcal F_\chi)=0$ since $\mathbb G_m$ is affine. 

(ii) Follows from (i) and Proposition \ref{deligne}. 

(iii) Follows from (ii), the Grothendieck trace formula, and Deligne's theorem (the Weil conjecture). 
\end{proof} 

\section{Improved degree bound}

In this section, we prove Theorem \ref{III}.  Let
$$G(t_0, \ldots, t_n)=t_0^A f(t_1, \ldots, t_n) +g(t_1, \ldots, t_n)+P_h(1/t_0) \in k[t_0^{\pm}, t_1,..., t_n],$$
where $f(t_1, \ldots, t_n)$ is a polynomial of degree $d$, $g(t_1, \ldots, t_n)$ is a polynomial of degree $e<d$, 
and $P_h(s)$ is a one-variable polynomial of degree $h$. We shall assume that $e\geq hd/(A+h)$, that is, $h \leq eA/(d-e)$.  
For any positive integer $m$, let $k_m$ be the extension of $k$ of degree $m$. 
Define two exponential sums over $k_m$ by 
\begin{eqnarray*}
 S_m(G)&=& \sum_{t_0\in k_m^*}\sum_{t_1,\ldots, t_n\in k_m} \psi\Big({\rm Tr}_{k_m/k}(G(t_0, \ldots, t_n))\Big),\\
 S_m^*(G)&=& \sum_{t_0\in k_m^*}\sum_{t_1,\ldots, t_n\in k_m^*} \psi\Big({\rm Tr}_{k_m/k}(G(t_0, \ldots, t_n))\Big).
 \end{eqnarray*}
 Their corresponding  $L$-functions are defined by 
$$L(G, T) =\exp\Big(\sum_{m=1}^{\infty}S_m(G)\frac{T^m}{m}\Big), \quad L^*(G, T) =\exp\Big(\sum_{m=1}^{\infty}S_m^*(G)\frac{T^m}{m}\Big).$$
They are rational functions. The degree $\deg(L(G,T))$ of $L(G,T)$ is defined to be the degree of its numerator minus the degree of 
its denominator. 
For each subset $S\subseteq \{1,\cdots, n\}$, let $G_S$ denote the polynomial obtained from $G$ by setting all $t_i=0$ for $i\in \{1,\ldots, n\}- S$. 
Thus, $G_S$ is a Laurent polynomial in $1+|S|$ variables. 
In a similar way, one defines the exponential sum $S_m(G_S)$ over ${\mathbb G}_m \times {\mathbb A}^{|S|}$, 
the exponential sum $S_m^*(G_S)$ over  ${\mathbb G}_m^{1+|S|}$, and their $L$-functions $L(G_S, T)$ and $L^*(G_S, T)$. 
The toric decomposition of ${\mathbb A}^{n}$ gives the decomposition 
$$S_m(G) =\sum_{S\subseteq \{1,\cdots, n\}} S_m^*(G_S), \quad L(G, T) =\prod_{S\subseteq \{1,\cdots, n\} }L^*(G_S,T).$$
Let $\Delta_1$ be the simplex in ${\mathbb R}^{n+1}$ with vertices 
$$(0, \ldots, 0), (A,0,\ldots, 0), (A,d,0\ldots, 0),\ldots, (A, 0,\ldots,0, d),$$ 
let $\Delta_2$ be the simplex in ${\mathbb R}^{n+1}$ with vertices 
$$(-h,0,\ldots, 0),(0, \ldots, 0),  (0, e,0,\ldots, 0), \ldots, (0, 0,\ldots, e),$$
let $\Delta_3$ be the convex hull in ${\mathbb R}^{n+1}$ of the points
$$(0, \ldots, 0), (A,d,0,\ldots, 0), \ldots, (A, 0,\ldots, 0,d), (0, e,0,\ldots, 0), \ldots, (0, 0,\ldots,0,e),$$
and let $\Delta=\Delta_1\cup \Delta_2\cup \Delta_3$. Then $\Delta$ is a convex polytope in ${\mathbb R}^{n+1}$, and the 
Newton polytope at $\infty$ of the Laurent polynomial $G(t_0, \ldots, t_n)$ is contained in $\Delta$. 
By the degree bound of Adolphson-Sperber \cite{AS}, we have 
$$|\deg(L^*(G,T))| \leq (n+1)! {\rm vol}(\Delta)= (n+1)! ({\rm vol}(\Delta_1)+ {\rm vol}(\Delta_2)+ {\rm vol}(\Delta_3)).$$
$\Delta_1$ and $\Delta_2$ are simplexes, and we have
$$(n+1)! {\rm vol}(\Delta_1) = Ad^n, \quad (n+1)! {\rm vol}(\Delta_2) = he^n.$$
The polytope $\Delta_3$ is not a simplex. Since $h \leq eA/(d-e)$, we can write $\Delta_3=\Delta_4 \backslash \Delta_5$, where 
$\Delta_4$ is the simplex in ${\mathbb R}^{n+1}$ with vertices 
$$(0, \ldots, 0), (-\frac{eA}{d-e},0,\ldots,0),  (A,d,0,\ldots, 0), \ldots, (A, 0,\ldots, 0,d),$$
and $\Delta_5$ is the simplex  in ${\mathbb R}^{n+1}$ with vertices 
$$(0,\ldots, 0), (-\frac{eA}{d-e},0,\ldots,0), (0, e,0,\ldots, 0), \ldots, (0, 0,\ldots, 0,e).$$
We have $\Delta_5 \subseteq \Delta_4$. It follows that 
\begin{eqnarray*}
(n+1)! {\rm vol}(\Delta_3) &=& (n+1)! {\rm vol}(\Delta_4) -(n+1)! {\rm vol}(\Delta_5)\\
&=& \frac{eA}{d-e}d^n - \frac{eA}{d-e}e^n.
\end{eqnarray*}
Here to calculate the volume of a simplex, we use the formula that if $\Sigma$ is a simplex in $\mathbb R^n$ with 
vertices $\{e_0, \ldots, e_n\}$, then $n!\mathrm{vol}(\Sigma)=\vert\mathrm{det}(A)\vert$, where $A$ is the matrix 
whose $i$-th row is given by the coordinates of $e_i-e_0$. 
Putting together, we obtain 
$$|\deg(L^*(G,T))| \leq \Big(A+ \frac{eA}{d-e}\Big)d^n -\Big (\frac{eA}{d-e}-h\Big)e^n.$$
Similarly, for each subset $S\subseteq \{1,\cdots, n\}$, we have 
$$|\deg(L^*(G_S,T))| \leq \Big(A+ \frac{eA}{d-e}\Big)d^{|S|} -\Big (\frac{eA}{d-e}-h\Big)e^{|S|}.$$
It follows that 
\begin{eqnarray*}
|\deg(L(G, T))|& \leq& \sum_{S\subseteq \{1,\cdots, n\} }|\deg(L^*(G_S,T))| \\
&\leq& \Big (A+ \frac{eA}{d-e}\Big)\sum_{S\subseteq \{1,\cdots, n\} } d^{|S|} - \Big(\frac{eA}{d-e}-h\Big)\sum_{S\subseteq \{1,\cdots, n\} }e^{|S|} \\
&=& \Big (A+ \frac{eA}{d-e}\Big)(d+1)^{n} - \Big(\frac{eA}{d-e}-h\Big)(e+1)^{n}.
\end{eqnarray*}
Suppose furthermore that $f$ is affine Dwork regular of degree $d$. We can choose a large $B$ not divisible by $p$ so that 
$e<\frac{Bd}{A+B}$. By Corollary \ref{K} (under the condition (b)), we have $$H_c^i(\mathbb G_{m,\bar k}\times \mathbb A_{\bar k}^n, 
\pi_1^*\mathcal K_\chi\otimes G^*\mathcal L_\psi)=0$$ for $i\not=n+1$. By Grothendieck's product formula for $L$-functions, 
$L(G, T)^{(-1)^n}$ is a polynomial whose degree is equal to $\mathrm{dim}\, H_c^{n+1}(\mathbb G_{m,\bar k}\times \mathbb A_{\bar k}^n, 
\pi_1^*\mathcal K_\chi\otimes G^*\mathcal L_\psi)$. So we have 
$$\mathrm{dim}\, H_c^{n+1}(\mathbb G_{m,\bar k}\times \mathbb A_{\bar k}^n, 
\pi_1^*\mathcal K_\chi\otimes G^*\mathcal L_\psi)\leq   \Big (A+ \frac{eA}{d-e}\Big)(d+1)^{n} - \Big(\frac{eA}{d-e}-h\Big)(e+1)^{n}.
 $$
The estimates in Theorem \ref{III} then follows from Grothendieck's trace formula and Deligne's theorem. 

\section{Generic Newton polygon}

Let $d, A, B$ be positive integers relatively prime to $p$. Consider the universal family 
of $(A,B)$-polynomials of the form 
\begin{equation}\label{eq}
G(t):=t_0^A f(t_1, \ldots, t_n) +g(t_1, \ldots, t_n)+P_B(1/t_0) \in \bar{k}[t_0^{\pm}, t_1,..., t_n]
\end{equation}
where $f(t_1, \ldots, t_n)$ is a polynomial of degree $d$, $g(t_1, \ldots, t_n)$ is a polynomial of degree $< Ad/(A+B)$, 
and $P_B(s)$ is a one-variable polynomial of degree exactly $B$. Let $M(d, A,B, p)$ be the Zariski open dense subspace of 
such $(A,B)$-polynomials $G(t)$ satisfying the additional condition that $f$ is affine Dwork regular. 
It is non-empty as the polynomial $f(t)=1+t_1^d+\cdots +t_n^d$ is affine Dwork regular. 

Suppose $G(t) \in M(d,A,B,p)$.  It is non-degenerate with respect to its Newton polytope $\Delta$ at $\infty$, which is the simplex ${\mathbb R}^{n+1}$ with vertices 
$$(-B, 0, \ldots, 0), (A,0,\ldots, 0), (A,d,0,\ldots, 0),\ldots, (A, 0,\ldots,0, d).$$  
By the work of Adolphson-Sperber \cite{AS2}, the $L$-function $L^*(G(t), T)^{(-1)^n}$ for the exponential sum over the torus 
$\mathbb{G}_m^{n+1}$ is a polynomial of degree $(A+B)d^n$, mixed of weights $\leq n+1$. 
Its Newton polygon lies above 
certain combinatorically defined lower bound called the Hodge polygon $\mathrm{HP}^*(\Delta)$. 

By the Grothendieck specialization theorem, the Newton polygon goes up under specialization. The generic Newton polygon exists for the family of 
$(A,B)$-polynomials $G(t) \in M(d,A,B,p)$. It is just the lowest possible Newton polygon as $G(t)$ varies 
in $M(d,A,B,p)$. Denote this generic Newton polygon by $\mathrm{GNP}^*(d, A,B,p)$, 
which also lies above $\mathrm{HP}^*(\Delta)$. If the two polygons coincide, we say that the family $M(d,A,B,p)$ 
is \emph{generically ordinary} for its $L$-function over the torus $\mathbb{G}_m^{n+1}$. This property depends only on the four numbers $d,A,B,p$. 

Similarly, for $G(t) \in M(d,A,B,p)$, the $L$-function 
$L(G(t), T)^{(-1)^n}$ for the exponential sum over $\mathbb{G}_m \times \mathbb{A}^n$ 
is a polynomial of degree $(A+B)(d-1)^n$, pure of weight $n+1$. Its Newton polygon lies above 
certain combinatorically defined lower bound called the Hodge polygon $\rm{HP}(\Delta)$. 
The generic Newton polygon is denoted by $\mathrm{GNP}(d, A,B,p)$, 
which also lies above $\mathrm{HP}(\Delta)$. If the two polygon coincides, we say that the family $M(d,A,B,p)$ 
is generically ordinary for its $L$-function over $\mathbb{G}_m \times \mathbb{A}^n$. This property again depends only on the four numbers $d,A,B,p$. 

Note that our family $M(d,A,B,p)$  can be strictly smaller than the universal family $M(\Delta, p)$ of 
non-degenerate and commode (with respect to $t_1, \ldots, t_n$) Laurent polynomials whose Newton polytope 
at $\infty$ is the given $\Delta$,  
as $\Delta$ may contain some lattice points which do not arise from exponents of the terms in $G(t)$. 
For this reason, proving generic ordinariness for this smaller family $M(d,A,B,p)$ can be 
somewhat harder than that for the larger family $M(\Delta, p)$. 
We prove that this is indeed true if $p \equiv 1 \mod [A, dB]$, where $[A, dB]$ denotes the 
least common multiple. That is, we have 

\begin{theorem}\label{NP} If $p \equiv 1 \mod [A, dB]$, then we have 
$$\mathrm{GNP}^*(d, A,B,p)= \mathrm{HP}^*(\Delta),\quad  \mathrm{GNP} (d, A,B,p)= \mathrm{HP}(\Delta).$$
\end{theorem}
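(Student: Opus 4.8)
The plan is to apply the facial decomposition theorem of \cite{W1} to reduce the generic ordinariness of the family $M(d,A,B,p)$ to that of the faces of the polytope $\Delta$, and then to recognize each facial piece as (essentially) a family of diagonal type for which the Stickelberger theorem gives ordinariness when $p$ lies in the stated arithmetic progression. Concretely, $\Delta$ is the simplex with vertices $(-B,0,\ldots,0)$, $(A,0,\ldots,0)$, $(A,d,0,\ldots,0),\ldots,(A,0,\ldots,0,d)$; its codimension-$1$ faces not containing the origin are the ``outer'' face $\sigma_1$ spanned by the vertices with first coordinate $A$ (a scaled standard simplex, corresponding to the Deligne polynomial $f$ of degree $d$) and the ``back'' face $\sigma_2$ through $(-B,0,\ldots,0)$ and the $(A,d\,e_i)$'s (corresponding essentially to $P_B$ together with $f$). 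The decomposition theorem says that if the universal family attached to each of these facial subpolytopes is generically ordinary, then so is $M(\Delta,p)$, and hence the Newton polygon $\mathrm{GNP}^*(d,A,B,p)$ equals $\mathrm{HP}^*(\Delta)$; the passage from $\mathbb G_m^{n+1}$ to $\mathbb G_m\times\mathbb A^n$ follows because the $\mathbb A^n$-sum decomposes torically into the $\mathbb G_m$-sums over coordinate subspaces, and each such piece is again of the same facial type in fewer variables, so one applies the torus statement in each dimension.

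The first step is to set up the combinatorics: describe $\Delta$ and its face lattice explicitly, identify the two (or, for the subfamilies $G_S$, the analogous) codimension-$1$ faces not through the origin, and verify that a generic $G(t)\in M(d,A,B,p)$ is non-degenerate and commode with respect to $\Delta$ — this is immediate from the Proposition in Section 1 together with the definition of $M(d,A,B,p)$ (affine-Dwork-regularity of $f$ and $\deg g<Ad/(A+B)$, which places the monomials of $g$ strictly inside $\Delta$). The second step is to invoke the facial decomposition theorem of \cite{W1} to replace ``generically ordinary for $\Delta$'' by ``generically ordinary for each face $\sigma$ of $\Delta$ not containing $0$.'' The third step is to analyze the individual faces: the outer face $\sigma_1$ gives the universal family of degree-$d$ Deligne polynomials in $n$ variables, whose generic ordinariness for $p\equiv 1\bmod d$ is classical (via the diagonal $t_1^d+\cdots+t_n^d$ and Stickelberger, as recalled in the introduction); the back face through $(-B,0,\ldots,0)$ produces, after a monomial (lattice) change of variables pulling it to a standard simplex, a diagonal-type family in which the relevant exponents are governed by $B$ and $d$ — here one needs $p\equiv 1\bmod dB$ (or the appropriate lcm) for the Gauss-sum valuations to be the generic ones. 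Taking the least common multiple of the moduli coming from all faces (of $\Delta$ and of all the $\Delta(G_S)$) gives $[A,dB]$; the factor $A$ enters because the monomial transformation that straightens the back face has determinant divisible by $A$, so the Stickelberger computation on the transformed lattice requires $p\equiv 1\bmod A$ as well.

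I expect the main obstacle to be the third step, and specifically the back face $\sigma_2$: unlike the outer face it is not a standard simplex on the nose, so one must choose an explicit unimodular-up-to-the-relevant-index change of coordinates, track how the lattice $\mathbb Z^{n+1}$ and the ``fundamental domain'' defining $\mathrm{HP}^*(\Delta)$ transform under it, and check that the diagonal Laurent polynomial one lands on really does compute the generic (lowest) Newton polygon — this is where the precise modulus $[A,dB]$ must be shown to be both sufficient and to match the Hodge polygon. A secondary point requiring care is that $M(d,A,B,p)$ is genuinely smaller than $M(\Delta,p)$ (some lattice points of $\Delta$ are not exponents occurring in $G$), so one must confirm that the diagonal witnesses used in the facial arguments actually lie in $M(d,A,B,p)$ — this is why the explicit choice $f=1+t_1^d+\cdots+t_n^d$ (and a generic diagonal $P_B$) is recorded. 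Once these facial ordinariness statements are in hand, assembling them via \cite{W1} and running the toric decomposition over the coordinate subspaces of $\mathbb A^n$ yields both equalities $\mathrm{GNP}^*(d,A,B,p)=\mathrm{HP}^*(\Delta)$ and $\mathrm{GNP}(d,A,B,p)=\mathrm{HP}(\Delta)$ simultaneously.
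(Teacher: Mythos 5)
Your overall strategy coincides with the paper's: decompose $\Delta$ facially into $\Delta_1$ (the cone over the face $t_0=A$) and $\Delta_2$ (the cone over the face through $(-B,0,\ldots,0)$ and the points $(A,d,0,\ldots,0),\ldots,(A,0,\ldots,0,d)$), apply the facial decomposition theorem of \cite{W1} to reduce generic ordinariness of $M(d,A,B,p)$ to that of the two restricted families $G_1=t_0^Af$ and $G_2=t_0^Af_d+bt_0^{-B}$, and deduce the statement over $\mathbb G_m\times\mathbb A^n$ from the torus statement (the paper does this via the boundary decomposition theorem of \cite{W1}, which is essentially the toric decomposition you describe). Your treatment of the faces by diagonal witnesses and Stickelberger is exactly the alternative the paper records in a remark; the proof proper instead uses the parallel hyperplane decomposition of \cite[Theorem 7.5]{W1} for the first face, which yields the sharper modulus $A$ there.

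However, your analysis of the individual faces contains a concrete error. The restriction of the family to the face $t_0=A$ is not the $n$-variable universal Deligne family with modulus $d$: it is the $(n+1)$-variable family $t_0^Af(t_1,\ldots,t_n)$, whose Hodge polygon $\mathrm{HP}^*(\Delta_1)$ genuinely involves $A$ (indeed $D(\Delta_1)=A$), and whose diagonal witness $t_0^A(1+t_1^d+\cdots+t_n^d)$ has exponent matrix of determinant $Ad^n$ with largest invariant factor $[d,A]$; so it is this face, not the back one, that forces a congruence modulo $A$. Conversely, the back-face diagonal $t_0^A(t_1^d+\cdots+t_n^d)+t_0^{-B}$ has exponent matrix of determinant $\pm Bd^n$, with largest invariant factor dividing $dB$; this determinant is not divisible by $A$ in general, so your claim that ``the factor $A$ enters because the monomial transformation that straightens the back face has determinant divisible by $A$'' is false, and no condition modulo $A$ is needed for that face. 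Since $[[d,A],dB]=[A,dB]$ (because $d\mid dB$), your final modulus is still sufficient and the argument is repairable rather than fatally flawed; but as written the justification of where $[A,dB]$ comes from is wrong, and the repair consists precisely of the two invariant-factor computations above (or of citing \cite[Theorem 7.5]{W1} for the first face, as the paper does).
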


\begin{proof}
The first assertion is stronger. It implies the second assertion, as the second assertion is a portion of the first assertion by the 
boundary decomposition theorem in \cite[Section 5]{W1}. 
To prove the first assertion, 
 we apply the various decomposition theorems in \cite{W1, W2}.  
 
 Let $\Delta_1$ be the simplex in ${\mathbb R}^{n+1}$ with vertices 
$$(0, \ldots, 0), (A,0,\ldots, 0), (A,d,0\ldots, 0),\ldots, (A, 0,\ldots,0, d).$$ 
Let $\Delta_2$ be the simplex in ${\mathbb R}^{n+1}$ with vertices 
$$(-B,0,\ldots, 0),(0, \ldots, 0),  (A, d,0,\ldots, 0), \ldots, (A, 0,\ldots, 0,d).$$
It is clear that $\Delta$ is the union of $\Delta_1$ and $\Delta_2$. This is the facial decomposition (\cite[Section 5]{W1}) of $\Delta$.  
The restriction of our universal family $G(t)$ to the unique codimension $1$ face of $\Delta_1$ 
not containing the origin is the following 
family of polynomials 
$$G_1(t) = t_0^A f(t_1,..., t_n),$$
where $f$ is a polynomial of degree $d$.  The Newton polytope at $\infty$ of this family is precisely $\Delta_1$. 
The restriction of our universal family $G(t)$ to the unique codimension $1$ face of $\Delta_2$ 
not containing the origin is the following 
family of Laurent polynomials 
$$G_2(t) = t_0^A f_d(t_1,..., t_n) +bt_0^{-B},$$
where $f_d$ is the leading form of $f$, and $b$ is the leading coefficient of $P_B$. The Newton 
polytope at $\infty$ of this family is precisely $\Delta_2$. 

The facial decomposition theorem in \cite[Theorem 5.5]{W1} says that $G(t)$ is ordinary with respect to $\mathrm{HP}^*(\Delta)$ if and only if 
$G_i(t)$ is 
ordinary with respect to $\mathrm{HP}^*(\Delta_i)$ for each $i\in\{1, 2\}$. 
In particular, for the ordinary property, as long as $\deg(g) < Ad/(A+B)$, the polynomial 
$g(t_1,\ldots, t_n)$ plays no role as its exponents do not lie on any of the two codimension $1$ faces 
of $\Delta$ not containing the origin.  Similarly, the lower degree terms in $P_B(t)$ and in $f(t)$ are irrelevant as far as the 
ordinariness property is concerned. 

Now, the first family $G_1(t)$ is generically ordinary with respect to $\mathrm{HP}^*(\Delta_1)$ under the condition $p \equiv 1 \mod A$. 
This is proved in \cite[Theorem 7.5]{W1} using a sequence of parallel hyperplane decompositions. 
In the special case $A=1$, it implies that the zeta function of the 
universal family of toric (or affine or projective) hypersurfaces of degree $d$ is  generically 
ordinary for every prime $p$ and every $n$,  a highly nontrivial result already. Using a similar sequence of parallel hyperplane decompositions, 
one deduces that the second family $G_2(t)$ is generically ordinary with
 respect to $\mathrm{HP}^*(\Delta_2)$ under the condition $p \equiv 1 \mod dB$. Putting together, we obtain Theorem \ref{NP}. 
\end{proof}

\begin{remark}  
As indicated above, both $\Delta_1$ and $\Delta_2$ are simplexes. Instead of using the hyperplane decomposition 
theorem, an alternative easier approach is  to choose an elementary diagonal example (the number of nonzero terms equals  
the number of variables) for each family $G_i(t)$ ($1\leq i\leq 2$) to compute its Newton polygon. 
A diagonal example in the family $G_1(t)$ is the non-degenerate polynomial 
$$G^{(0)}_1(t)=t_0^A (1 + t_1^d +\cdots + t_n^d).$$
The matrix of its exponents is a square matrix with the largest invariant factor $[d, A]$ 
and thus $G^{(0)}_1(t)$ is ordinary if $p \equiv 1 \mod [d, A]$ by \cite[Corollay 2.6]{W2}. This gives a weaker result 
for the first family than 
what can be obtained by using the hyperplane decomposition, but it gives the same result 
as Theorem \ref{NP}. 
Similarly, a diagonal example in the family $G_2(t)$ is the non-degenerate polynomial 
$$G^{(0)}_2(t) = t_0^A (t_1^d +\cdots + t_n^d) + t_0^{-B}.$$
The matrix of its exponents is a square matrix with the largest invariant factor dividing $dB$ 
and thus $G^{(0)}_2(t)$ is ordinary if $p \equiv 1 \mod  dB$ by \cite[Corollay 2.6]{W2}. It follows that the total 
family $G(t)$ is generically ordinary if $p \equiv 1 \mod [A, dB]$. 
\end{remark}

\begin{remark}  
Given an integral $(n+1)$-dimensional convex polytope in $\mathbb{R}^{n+1}$ 
containing the origin, we can consider the universal family $M(\Delta, p)$ of all non-degenerate Laurent polynomials whose Newton polytope 
at $\infty$ is $\Delta$. Its 
generic Newton polygon over the torus $\mathbb{G}_m^{n+1}$ is denoted by $\mathrm{GNP}^*(\Delta,p)$,  which depends only on $\Delta$ and $p$. 
Adolphson-Sperber's work implies that $\mathrm{GNP}^*(\Delta,p)$ lies above  a certain explicit lower bound $\mathrm{HP}^*(\Delta)$, called the Hodge polygon.  
They conjectured \cite{AS2} that 
$$\mathrm{GNP}^*(\Delta,p) = \mathrm{HP}^*(\Delta) \hbox{ if } p \equiv 1 \mod D(\Delta),$$
where $D(\Delta)$ is the denominator of $\Delta$. It is not hard to prove that the condition $p \equiv 1 \mod D(\Delta)$ 
is necessary (and thus optimal) for the conjecture to be true, either by a direct combinatorial proof or by a ramification argument. 
The Adolphson-Sperber conjecture is false in general, but true in many importance cases as shown in \cite{W1}\cite{W2},  including 
notably the above $\Delta_1$ coming from the first family $G_1(t)$. 
We expect that this conjecture is true for the above $\Delta$ defined by the $(A,B)$-polynomial. One checks that 
$D(\Delta_1)=A$ since the unique codimension $1$ face of $\Delta_1$ not containing the origin is defined by the hyperplane equation $\frac{1}{A}t_0=1$. 
Similarly, one checks that $D(\Delta_2)= [B, dB/(A+B, dB)]$ since the unique codimension $1$ face of $\Delta_2$ not containing the origin is defined by the hyperplane equation 
$$\frac{-1}{B}t_0 + \frac{A+B}{dB} t_1+\cdots + \frac{A+B}{dB} t_n =1.$$
It follows that 
$$D(\Delta)= [D(\Delta_1), D(\Delta_2)]  = [A, B, dB/(A+B, dB)].$$
For the $\Delta$ defined using the $(A,B)$-polynomials, the Adolphson-Sperber conjecture 
says that $\mathrm{GNP}^*(\Delta,p) = \mathrm{HP}^*(\Delta)$ if $p \equiv 1 \mod [A, B, dB/(A+B, dB)]$. 
We expect this to be true. It is sufficient to prove it for the second piece $\Delta_2$ as the first piece $\Delta_1$ 
is already known as seen above. However,  we do not expect that the condition $p \equiv 1 \mod [A, dB]$ in Theorem \ref{NP} 
can be relaxed to $p \equiv 1 \mod [A, B, dB/(A+B, dB)]$, as the family $M(d,A,B,p)$ can be significantly smaller 
than the family $M(\Delta, p)$. 
\end{remark}

Although the recursive combinatorial definition of the Hodge numbers in $\mathrm{HP}^*(\Delta)$ and $\mathrm{HP}(\Delta)$ 
as given in \cite{AS2} are not 
complicated, a simple explicit formula for the Hodge numbers can be cumbersome to obtain. To give an indication of
what the generic slopes  look like, we give, without proof, an explicit formula for the Hodge numbers and thus the Hodge polygon 
for our $(A,B)$-polytope $\Delta$. For simplicity of notations, we shall assume that $A=B=1$. 

Define 
\begin{eqnarray*}
H^*(T)& =& \prod_{0\leq j_1, \ldots, j_n \leq d-1} \Big(1- q^{\frac{j_1}{d}+\cdots + 
\frac{j_n}{d} +\{\frac{j_1}{d}+\cdots + \frac{j_n}{d}\} }T\Big)\Big(1- q^{\frac{j_1}{d}+\cdots + 
\frac{j_n}{d} +1-\{\frac{j_1}{d}+\cdots + \frac{j_n}{d}\} }T\Big),\\
H(T) &= &\prod_{1\leq j_1, \ldots, j_n \leq d-1}\Big (1- q^{\frac{j_1}{d}+\cdots + \frac{j_n}{d} 
+\{\frac{j_1}{d}+\cdots + \frac{j_n}{d}\} }T\Big)\Big(1- q^{\frac{j_1}{d}+\cdots + \frac{j_n}{d} +1-\{\frac{j_1}{d}+\cdots + \frac{j_n}{d}\} }T\Big),
\end{eqnarray*}
where $\{r\}=r-[r]$ denotes the 
fractional part of $r$.  
It is clear that $H^*(T)$ is a polynomial of degree $(A+B)d^n =2d^n$. 
Similarly, $H(T)$ is a polynomial of degree $(A+B)(d-1)^n =2(d-1)^n$, whose slopes are symmetric  in the 
interval $[0, n+1]$. In the case $A=B=1$, the Hodge polygon $\mathrm{HP}^*(\Delta)$ (resp. $\mathrm{HP}(\Delta)$)
is simply  
the $q$-adic Newton polygon of $H^*(T)$ (resp. $H(T)$). Note that the slopes of $H^*(T)$ and $H(T)$ are rational numbers 
with denominators dividing $d$. The coefficients of the $L$-function lie in the $p$-th cyclotimic field $\mathbb{Q}(\zeta_p)$ 
which is totally ramified of degree $p-1$ over $p$. This explains the congruence condition $p\equiv 1 \mod d$ 
of Theorem \ref{NP} in the case $A=B=1$. 

Finally, if our $(A,B)$-exponential sum is twisted by a multiplicative character $\chi$ of order $m$ dividing $q -1$, 
then the generic Newton polygon for the corresponding twisted $L$-function over the torus $\mathbb{G}_m^{n+1}$  
(resp., over $\mathbb{G}_m \times \mathbb{A}^n$) lies  above $\mathrm{HP}^*(\Delta)$ (resp. over $\mathrm{HP}(\Delta)$). 
Furthermore, these two polygons coincide if $p \equiv 1 \mod [mA, dmB]$. One simply applies the 
above theorem to the $(mA, mB)$-polynomial $G(t_0^m, t_1,\ldots, t_n)$ and decomposes in terms of 
the multiplicative characters $\chi$ of order dividing $m$.  

\vfill
\eject

\end{document}